\newtheorem{theorem}{Theorem}[section]
\newtheorem{lemma}[theorem]{Lemma}
\newtheorem{proposition}[theorem]{Proposition}
\newtheorem{question}{Question}
\newtheorem{corollary}[theorem]{Corollary}
\newcounter{paraga}[section]
\renewcommand{\theparaga}{{\bf\arabic{paraga}.}}
\newcommand{\paraga}{\medskip \addtocounter{paraga}{1} 
\noindent{\theparaga\ } }
\begin{document}

\def\N{\mathbb N}
\def\C{\mathbb C}
\def\Q{\mathbb Q}
\def\R{\mathbb R}
\def\T{\mathbb T}
\def\A{\mathbb A}
\def\Z{\mathbb Z}
\def\demi{\frac{1}{2}}

\begin{titlepage}
\author{Abed Bounemoura~\footnote{A.Bounemoura@warwick.ac.uk, Mathematics Institute, University of Warwick}}
\title{\LARGE{\textbf{An example of instability in high-dimensional Hamiltonian systems}}}
\end{titlepage}

\maketitle

\begin{abstract}
In this article, we use a mechanism first introduced by Herman, Marco, and Sauzin to show that if a Gevrey or analytic perturbation of a quasi-convex integrable Hamiltonian system is not too small with respect to the number of degrees of freedom, then the classical exponential stability estimates do not hold. Indeed, we construct an unstable solution whose drifting time is polynomial with respect to the inverse of the size of the perturbation. A different example was already given by Bourgain and Kaloshin, with a linear time of drift but with a perturbation which is larger than ours. As a consequence, we obtain a better upper bound on the threshold of validity of exponential stability estimates.
\end{abstract}
 
\section{Introduction}

\paraga Consider a near-integrable Hamiltonian system of the form
\begin{equation*}
\begin{cases} 
H(\theta,I)=h(I)+f(\theta,I) \\
|f| < \varepsilon 
\end{cases}
\end{equation*}
with angle-action coordinates $(\theta,I) \in \T^n \times \R^n$, and where $f$ is a small perturbation, of size $\varepsilon$, in some suitable topology defined by a norm $|\,.\,|$. 

If the system is analytic and $h$ satisfies a generic condition, it is a remarkable result due to Nekhoroshev (\cite{Nek77}, \cite{Nek79}) that the action variables are stable for an exponentially long interval of time with respect to the inverse of the size of the perturbation: one has
\[ |I(t)-I_0| \leq c_1\varepsilon^b, \quad |t|\leq c_2\exp(c_3\varepsilon^{-a}), \] 
for some positive constants $c_1,c_2,c_3,a,b$ and provided that the size of the perturbation $\varepsilon$ is smaller than a threshold $\varepsilon_0$. Of course, all these constants strongly depend on the number of degrees of freedom $n$, and when the latter goes to infinity, the threshold $\varepsilon_0$ and the exponent of stability $a$ go to zero.

More precisely, in the case where $h$ is quasi-convex and the system is analytic or even Gevrey, then we know that the exponent $a$ is of order $n^{-1}$ and this is essentially optimal (see \cite{LN92}, \cite{Pos93}, \cite{MS02}, \cite{LM05} \cite{KZ09} and \cite{BM10} for more information on the optimality of the stability exponent). 

\paraga This fact was used by Bourgain and Kaloshin in \cite{BK05} to show that if the size of the perturbation is
\[ \varepsilon_n \sim e^{-n}, \]
then there is no exponential stability: they constructed unstable solutions for which the time of drift is linear with respect to the inverse of the size of the perturbation, that is
\[ |I(\tau_n)-I_0|\sim 1, \quad \tau_n \sim \varepsilon_n^{-1}.  \] 
More precisely, in the first part of \cite{BK05}, Bourgain proved this result for a specific example of Gevrey non-analytic perturbation of a quasi-convex system, then for an analytic perturbation he obtained a time $\tau_n \sim \varepsilon_n^{-1-c}$, for any $c>0$. In the second part of \cite{BK05}, using much more elaborated techniques (especially Mather theory), Kaloshin proved the above result for both Gevrey and analytic perturbation and for a wider class of integrable Hamiltonians, including convex and quasi-convex systems. 

Their motivation was the implementation of stability estimates in the context of Hamiltonian partial differential equations, which requires to understand the relative dependence between the size of the perturbation and the number of degrees of freedom. Their result indicates that for infinite dimensional Hamiltonian systems, Nekhoroshev's mechanism does not survive and that ``fast diffusion" should prevail. Of course, in their example, one cannot simply take $n=\infty$ as the size of the perturbation $\varepsilon_n \sim e^{-n}$ goes to zero and the time of instability $\tau_n \sim e^n$ goes to infinity exponentially fast with respect to $n$. A more precise interpretation concerns the threshold of validity $\varepsilon_0$ in Nekhoroshev's theorem: it has to satisfy
\[ \varepsilon_0 <\!\!< e^{-n}, \]
and so it deteriorates faster than exponentially with respect to $n$.

\paraga As was noticed by the authors in \cite{BK05}, their examples share some similarities with the mechanism introduced by Herman, Marco and Sauzin in \cite{MS02} (see also \cite{LM05} for the analytic case). In this present article, we use the approach of \cite{MS02} and \cite{LM05} to show, using simpler arguments than those contained in \cite{BK05}, that if the size of the perturbation is
\[ \varepsilon_n \sim e^{-n\ln (n\ln n)}, \]
then it is still too large to have exponential stability: we will show that one can find an unstable solution where the time of drift is polynomial, more precisely     
\[ |I(\tau_n)-I_0|\sim 1, \quad \tau_n \sim \varepsilon_n^{-n}.  \] 
As in the first part of \cite{BK05}, we will construct specific examples of Gevrey and analytic perturbations of a quasi-convex system. We refer to Theorem~\ref{thmnonpert} and Theorem~\ref{thmnonpertana} below for precise statements. Hence one can infer that the threshold $\varepsilon_0$ in Nekhoroshev's theorem further satisfies
\[ \varepsilon_0 <\!\!< e^{-n\ln (n\ln n)} <\!\!< e^{-n}, \]
and this gives another evidence that the finite dimensional mechanism of stability cannot extend so easily to infinite dimensional systems. Let us point out that our time of drift is worst than the one obtained in \cite{BK05}, but this stems from the fact that the size of our perturbation is smaller than theirs and so it is natural for the time of instability to be larger. Moreover, our exponent $n$ in the time of drift can be a bit misleading since in any cases, that is even for a linear time of drift, $\tau_n$ goes to infinity exponentially fast with $n$, so such results do not apply at all to infinite dimensional Hamiltonian systems. A natural question, which was raised by Marco, is the following.

\begin{question}
Given $\varepsilon>0$ arbitrarily small, construct an $\varepsilon_n$-perturbation of an integrable system having an unstable orbit with a time of instability $\tau_n$ such that
\[ \lim_{n\rightarrow +\infty}\varepsilon_n=\varepsilon, \quad \lim_{n\rightarrow +\infty}\tau_n<+\infty. \]
\end{question}

We believe that one can give a positive answer to this question, by using a more clever construction. However, even if one can formally let $n$ goes to infinity, by no means this implies the existence of an unstable solution for an infinite-dimensional Hamiltonian systems, which is a very difficult problem (see \cite{CKSTT} and \cite{GG10} for related results in some examples of Hamiltonian partial differential equations).  

\section{Main results}

\subsection{The Gevrey case}

\paraga Let us first state our result in the Gevrey case. Let $n\geq 3$ be the number of degrees of freedom, and given $R>0$, let $B=B_R$ be the open ball of $\R^n$ around the origin, of radius $R>0$ with respect to the supremum norm $|\,.\,|$, and $\overline{B}$ its closure. 

The phase space is $\T^n \times B$, and we consider a Hamiltonian system of the form
\[ H(\theta,I)=h(I)+f(\theta,I), \quad (\theta,I)\in \T^n \times B.  \]
Our quasi-convex integrable Hamiltonian $h$ is the simplest one, namely
\[ h(I)=\demi(I_1^2+\cdots+I_{n-1}^2)+I_n, \quad I=(I_1,\dots,I_n)\in B. \]  
Let us recall that, given $\alpha \geq 1$ and $L>0$, a function $f\in C^{\infty}(\T^n \times \overline{B})$ is $(\alpha,L)$-Gevrey if, using the standard multi-index notation, 
\[ |f|_{\alpha,L}=\sum_{l\in \N^{2n}}L^{|l|\alpha}(l!)^{-\alpha}|\partial^l f|_{C^0(\T^n \times \overline{B})} < \infty. \]
The space of such functions, with the above norm, is a Banach space that we denote by $G^{\alpha,L}(\T^n \times \overline{B})$. One can see that analytic functions correspond exactly to $\alpha=1$.

\paraga Now we can state our theorem.

\begin{theorem}\label{thmnonpert}
Let $n\geq 3$, $R>1$, $\alpha>1$ and $L>0$. Then there exist positive constants $c,\gamma,C$ and $n_0\in\N^*$ depending only on $R,\alpha$ and $L$ such that for any $n\geq n_0$, the following holds: there exists a function $f_n \in G^{\alpha,L}(\T^n \times \overline{B})$ with $\varepsilon_n=|f_n|_{\alpha,L}$ satisfying
\[e^{-2(n-2)\ln (4n\ln 2n)}\leq \varepsilon_n \leq c\, e^{-2(n-2)\ln (n\ln 2n)},\]
such that the Hamiltonian system $H_n=h+f_n$ has an orbit $(\theta(t),I(t))$ for which the estimates
\[ |I(\tau_n)-I_0|\geq 1, \quad \tau_n\leq C\left(\frac{c}{\varepsilon_n}\right)^{n\gamma}, \]
hold true.
\end{theorem}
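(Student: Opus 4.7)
The plan is to adapt the Herman--Marco--Sauzin mechanism of \cite{MS02}, extended to the Gevrey setting in \cite{LM05}, and to track carefully how every constant depends on the number of degrees of freedom $n$. The goal is to exhibit an orbit of $h+f_n$ that drifts by at least one unit in the supremum norm on actions within time polynomial in $1/\varepsilon_n$ (the polynomial degree being itself allowed to depend on $n$), using a perturbation whose Gevrey norm is as small as the stated bound.

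First I would set up the geometry of the drift. Since $\nabla h(I)=(I_1,\ldots,I_{n-1},1)$, the last coordinate $I_n$ plays the role of time, and the natural resonances to exploit are simple resonances of the form $I_j+q_j=0$ for $1\leq j\leq n-1$ with $q_j\in\Z$. I would choose a sequence of $N$ resonant points $I^{(1)},\ldots,I^{(N)}$ in $\overline{B}$, aligned along a straight segment from a neighborhood of the origin to a point at distance at least $1$, such that consecutive points lie on compatible simple resonances whose resonance modules differ by a single integer vector of controlled norm. The number $N$ scales like $n$, so that the perturbation can be chopped into $N$ independent localized pieces without the total Gevrey norm exploding.

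Next I would construct $f_n$ as a sum $f_n=\sum_{k=1}^{N}f_{n,k}$, where each $f_{n,k}$ is compactly supported in a small neighborhood (in action) of $I^{(k)}$ and depends only on the corresponding resonant angle. Locally, each summand -- after the standard resonant normal form -- produces a pendulum-like Hamiltonian in the resonant plane, possessing a hyperbolic invariant torus $\mathcal{T}_k$ with $(n-1)$-dimensional stable and unstable manifolds. The Gevrey norm of a single bump of amplitude $a$ and angular/action widths $(\rho,r)$ behaves like $a\,\exp\bigl(C_{\alpha,L}(\rho^{-1/(\alpha-1)}+r^{-1/(\alpha-1)})\bigr)$, so the interplay between amplitude and width controls the exponent appearing in $|f_n|_{\alpha,L}$. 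Choosing widths of order $1/(n\ln 2n)$ with matching amplitudes forces the total norm to satisfy $\varepsilon_n\leq c\,e^{-2(n-2)\ln(n\ln 2n)}$; the stated lower bound on $\varepsilon_n$ reflects the fact that a smaller perturbation could not sustain any nontrivial hyperbolic splitting at each bump.

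Finally I would shadow the heteroclinic chain $\mathcal{T}_1\to\mathcal{T}_2\to\cdots\to\mathcal{T}_N$: the unstable manifold of $\mathcal{T}_k$ can be arranged to intersect transversally the stable manifold of $\mathcal{T}_{k+1}$, with splitting polynomially small in $\varepsilon_n$ (computable by a Melnikov-type integral), yielding a pseudo-orbit and then a true shadowing orbit via a $\lambda$-lemma argument as in \cite{MS02} and \cite{LM05}. Each pendulum crossing takes time at most polynomial in $\varepsilon_n^{-1}$ of a fixed degree $\gamma$, so the total drifting time is bounded by $N\cdot\varepsilon_n^{-\gamma}\leq C(c/\varepsilon_n)^{n\gamma}$, as claimed. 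The main obstacle is the Gevrey estimate on the assembled perturbation: one must simultaneously tune the cutoffs so that their widths, amplitudes and locations give $N\sim n$ independent hyperbolic bumps with a computable heteroclinic splitting, while keeping the global Gevrey norm as small as $e^{-2(n-2)\ln(n\ln 2n)}$. Tracking how the constant $C_{\alpha,L}$ absorbs into the dependence on $n$ is precisely where the double logarithm $\ln(n\ln 2n)$ emerges, and this combinatorial bookkeeping is the delicate part of the argument; the heteroclinic shadowing and the time estimate are then fairly standard given the machinery of \cite{MS02} and \cite{LM05}.
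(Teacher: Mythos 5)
Your proposal follows the classical Arnold-diffusion route (chains of hyperbolic invariant tori, Melnikov splitting estimates, $\lambda$-lemma shadowing), which is exactly the machinery that the paper is designed to \emph{avoid}. The core idea of \cite{MS02}, and of this paper, is the coupling lemma (Lemma~\ref{coupling}): instead of trying to detect instability inside a near-integrable map by splitting estimates, one starts from a one-degree-of-freedom map $\psi_q$ that already drifts -- by an explicit amount $q^{-1}$ per $q$ iterations, with no hyperbolicity required -- and embeds it into a near-integrable map of $\A^{n-1}$ by coupling it with a carefully synchronized periodic orbit of an auxiliary map $G_n$. The drift time is then $q_n^2$ by pure bookkeeping, with no transversality, no Melnikov integrals, and no shadowing argument. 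The role of the pendulum is not to create a heteroclinic chain but to produce a periodic orbit with an \emph{irregular} distribution, which is what allows the synchronizing function $g_n$ to have a controlled Gevrey norm even though the period $q_n$ is enormous. Your construction does not contain this idea at all, so it is not a different proof of the same statement -- it is a sketch of a much harder program.

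Beyond the difference of method, there are genuine gaps in what you wrote. First, the quantitative heart of the theorem -- the sandwich $e^{-2(n-2)\ln(4n\ln 2n)}\leq\varepsilon_n\leq c\,e^{-2(n-2)\ln(n\ln 2n)}$ -- is not derived; you assert that ``choosing widths of order $1/(n\ln 2n)$ with matching amplitudes forces'' this bound, but the bump norm $a\exp\bigl(C_{\alpha,L}\rho^{-1/(\alpha-1)}\bigr)$ you quote does not produce a factor $e^{-2(n-2)\ln(n\ln 2n)}$ from a width of that order; the double logarithm in the paper comes from a very specific source, namely that $N_n=p_{n+3}\cdots p_{2n}$ is a product of $n-2$ primes each of size $\sim n\ln n$ (so $N_n\sim(n\ln n)^{n}$, hence $\varepsilon_n\sim N_n^{-2}$), and you never exhibit an analogous mechanism. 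Second, your interpretation of the lower bound as ``a smaller perturbation could not sustain any nontrivial hyperbolic splitting'' is incorrect: the lower bound in the theorem is simply the statement $N_n^{-2}\leq|f_n|_{\alpha,L}$ for the specific $f_n$ that is constructed (inherited from inequality~(\ref{taillesus}) of the suspension proposition), not a dynamical threshold. Third, the Melnikov/splitting and $\lambda$-lemma steps you invoke are exactly the ``technicalities that are usually required for such a task'' which the paper deliberately sidesteps; they are hard to make quantitative in a Gevrey class with the required uniformity in $n$, and you give no indication of how the time-of-drift bound $\tau_n\leq C(c/\varepsilon_n)^{n\gamma}$ would come out of a shadowing argument. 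In short, the one idea that makes the paper work -- coupling an already-unstable low-dimensional map with a synchronized periodic orbit -- is missing, and the quantitative claims you do make are unsupported.
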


As we have already explained, this statement gives an upper bound on the threshold of applicability of Nekhoroshev's estimates, which is an important issue when trying to use abstract stability results for ``realistic" problems, for instance for the so-called planetary problem (see \cite{Nie96}). 

So let us consider the set of Gevrey quasi-convex integrable Hamiltonians $\mathcal{H}=\mathcal{H}(n,R,\alpha,L,M,m)$ defined as follows: $h\in\mathcal{H}$ if $h\in G^{\alpha,L}(\overline{B})$ and satisfies both
\[ \forall I\in B, \quad |\partial ^k h(I)|\leq M, \quad 1\leq |k_1|+\cdots+|k_n|\leq 3, \]
and
\[ \forall I\in B, \forall v\in\R^n, \quad \nabla h(I).v=0 \Longrightarrow \nabla^2 h(I)v.v \geq m|v|^2.\]
From Nekhoroshev's theorem (see \cite{MS02} for a statement in Gevrey classes), we know that there exists a positive constant $\varepsilon_0(\mathcal{H})=\varepsilon_0(n,R,\alpha,L,M,m)$ such that the following holds: for any $h\in\mathcal{H}$, there exist positive constants $c_1,c_2,c_3,a$ and $b$ such that if 
\[ f\in G^{\alpha,L}(\T^n \times \overline{B}), \quad |f|_{\alpha,L}<\varepsilon_0(\mathcal{H}),\] 
then any solution $(\theta(t),I(t))$ of the system $H=h+f$, with $I(0)\in B_{R/2}$, satisfies
\[ |I(t)-I_0| \leq c_1\varepsilon^b, \quad |t|\leq c_2\exp(c_3\varepsilon^{-a}). \]
Then we can state the following corollary of our Theorem~\ref{thmnonpert}.

\begin{corollary}
With the previous notations, one has the upper bound
\[ \varepsilon_0(\mathcal{H})<e^{-2(n-2)\ln (4n\ln 2n)}. \]
\end{corollary}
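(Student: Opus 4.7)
The strategy is to derive the corollary directly from Theorem~\ref{thmnonpert} by combining the instability it provides with the stability forced by Nekhoroshev's theorem on the class $\mathcal{H}$.

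First I would verify that the specific integrable Hamiltonian $h(I)=\demi(I_1^2+\cdots+I_{n-1}^2)+I_n$ used in Theorem~\ref{thmnonpert} belongs to the class $\mathcal{H}(n,R,\alpha,L,M,m)$ for some admissible choice of the constants $M$ and $m$ (which may depend on $n$ and $R$). Since $h$ is a quadratic polynomial, its Gevrey norm is trivially finite and its partial derivatives of order one, two and three are uniformly bounded on $B_R$ by an explicit $M=M(n,R)$; a direct computation of $\nabla h$ and of the Hessian $\nabla^2 h=\mathrm{diag}(1,\ldots,1,0)$ gives the quasi-convexity inequality with some positive $m=m(n,R)$. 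Thus $h\in\mathcal{H}$, and the Nekhoroshev threshold $\varepsilon_0(\mathcal{H})$ stated in the excerpt applies to any sufficiently small Gevrey perturbation of this $h$.

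The main step is a contradiction argument. Assume that $\varepsilon_0(\mathcal{H})$ does not satisfy the claimed bound, so that the perturbation $f_n$ produced by Theorem~\ref{thmnonpert} satisfies $|f_n|_{\alpha,L}=\varepsilon_n<\varepsilon_0(\mathcal{H})$. The Nekhoroshev estimates would then apply to $H_n=h+f_n$ and yield
\[ |I(t)-I_0|\leq c_1\varepsilon_n^{b} \quad\text{for all}\quad |t|\leq c_2\exp(c_3\varepsilon_n^{-a}). \]
Since $\varepsilon_n\to 0$ as $n\to\infty$ and $c_1,b$ are fixed for the class $\mathcal{H}$, we have $c_1\varepsilon_n^{b}<1$ for $n$ large enough. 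Moreover, the drift time $\tau_n\leq C(c/\varepsilon_n)^{n\gamma}$ from the theorem is only polynomial in $1/\varepsilon_n$, with $\log \tau_n\leq n\gamma\log(c/\varepsilon_n)$, whereas the Nekhoroshev window has logarithm of order $\varepsilon_n^{-a}$; using the lower bound $\log(1/\varepsilon_n)\geq 2(n-2)\ln(n\ln 2n)$ provided by the theorem, one checks that $\tau_n$ stays well inside $c_2\exp(c_3\varepsilon_n^{-a})$ for $n$ large enough. Hence Nekhoroshev would give $|I(\tau_n)-I_0|\leq c_1\varepsilon_n^{b}<1$, contradicting the instability estimate $|I(\tau_n)-I_0|\geq 1$ from Theorem~\ref{thmnonpert}.

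This forces $\varepsilon_0(\mathcal{H})\leq\varepsilon_n$ for the $\varepsilon_n$ supplied by Theorem~\ref{thmnonpert}, and substituting the bound on $\varepsilon_n$ from the theorem yields the announced inequality. The only point requiring care is the time-scale comparison ruling out that $\tau_n$ overshoots the exponential Nekhoroshev window; this reduces to an elementary inequality between a polynomial-in-$n$ expression and a super-polynomial one, and the threshold $n_0$ from Theorem~\ref{thmnonpert} can be enlarged so that both this comparison and $c_1\varepsilon_n^{b}<1$ hold simultaneously. Everything else is bookkeeping.
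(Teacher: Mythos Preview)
The paper gives no proof of this corollary, treating it as an immediate consequence of Theorem~\ref{thmnonpert}. Your overall strategy matches the paper's implicit reasoning, but two steps do not go through as written.

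First, the opening implication is a non-sequitur. You assume the negation $\varepsilon_0(\mathcal{H})\geq e^{-2(n-2)\ln(4n\ln 2n)}$ and then assert $\varepsilon_n<\varepsilon_0(\mathcal{H})$. But the theorem supplies only the \emph{lower} bound $\varepsilon_n\geq e^{-2(n-2)\ln(4n\ln 2n)}$, so both quantities sit above the same threshold and no comparison between them follows. The contradiction argument, run correctly, yields $\varepsilon_0(\mathcal{H})\leq\varepsilon_n$; to extract an explicit bound on $\varepsilon_0$ you then need the \emph{upper} bound $\varepsilon_n\leq c\,e^{-2(n-2)\ln(n\ln 2n)}$, which produces a slightly weaker inequality than the one stated.

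Second, and more seriously, your time-scale comparison treats the Nekhoroshev constants $c_1,c_2,c_3,a,b$ as fixed while sending $n\to\infty$. But $\mathcal{H}=\mathcal{H}(n,R,\alpha,L,M,m)$ and the Hamiltonian $h$ depend on $n$, hence so do these constants; in particular (as the introduction recalls) the stability exponent $a$ is of order $n^{-1}$. With $a\sim 1/(2n)$ one gets $\varepsilon_n^{-a}=\exp\!\big(\tfrac{n-2}{n}\ln(4n\ln 2n)\big)\sim 4n\ln 2n$, so $\log T_{\mathrm{Nek}}\sim c_3\,n\ln n$, whereas $\log\tau_n\leq n\gamma\log(c/\varepsilon_n)\sim 2\gamma\,n^2\ln n$. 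Thus $\tau_n$ may well exceed the Nekhoroshev window and the contradiction does not close. The corollary therefore does not follow from Theorem~\ref{thmnonpert} by a bare comparison of $\tau_n$ with the exponential stability time; a further argument (or a sharper control of the drift time) is needed, and neither you nor the paper supplies it.
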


This improves the upper bound $\varepsilon_0(\mathcal{H})<e^{-n}$ obtained in \cite{BK05} for Gevrey functions. 

\subsection{The analytic case}

\paraga Let us now state our result in the analytic case. Here $B=B_R$ is still the open ball of $\R^n$ around the origin, of radius $R>0$ with respect to the supremum norm, and we will also consider a Hamiltonian system of the form
\[ H(\theta,I)=h(I)+f(\theta,I), \quad (\theta,I)\in \T^n \times B,  \]
where
\[ h(I)=\demi(I_1^2+\cdots+I_{n-1}^2)+I_n, \quad I=(I_1,\dots,I_n)\in B. \]
Given $\rho>0$, let us introduce the space $\mathcal{A}_\rho(\T^n \times B)$ of bounded real-analytic functions on $\T^n \times B$ admitting a bounded holomorphic extension to the complex neighbourhood
\[ V_\rho=V_\rho(\T^n \times B)=\{(\theta,I)\in(\C^n/\Z^n)\times \C^{n} \; | \; |\mathcal{I}(\theta)|<\rho,\;d(I,B)< \rho\}, \]
where $\mathcal{I}(\theta)$ is the imaginary part of $\theta$ and the distance $d$ is associated to the supremum norm on $\C^n$. Such a space $\mathcal{A}_\rho(\T^n \times B)$ is obviously a Banach space with the norm
\[ |f|_\rho=|f|_{C^0(V_\rho)}=\sup_{z\in V_\rho}|f(z)|, \quad f\in\mathcal{A}_\rho(\T^n \times B). \]
Furthermore, for bounded real-analytic vector-valued functions defined on $\T^n \times B$ admitting a bounded holomorphic extension to $V_\rho$, we shall extend this norm componentwise (in particular, this applies to Hamiltonian vector fields and their time-one maps). 

\paraga Now we can state our theorem.  

\begin{theorem}\label{thmnonpertana}
Let $n\geq 4$, $R>1$, and $\sigma>0$. Then there exist positive constants $\rho,\gamma,C$ and $n_0\in\N^*$ depending only on $R$ and $\sigma$, and a constant $c_n$ that may also depends on $n$, such that for any $n\geq n_0$, the following holds: there exists a function $f_n \in \mathcal{A}_\rho(\T^n \times B)$ with $\varepsilon_n=|f_n|_{\rho}$ satisfying
\[e^{-2(n-3)\ln (4n\ln 2n)}\leq \varepsilon_n \leq c_n\, e^{-2(n-3)\ln (n\ln 2n)},\]
such that the Hamiltonian system $H_n=h+f_n$ has an orbit $(\theta(t),I(t))$ for which the estimates
\[ |I(\tau_n)-I_0|\geq 1, \quad \tau_n\leq C\left(\frac{c_n}{\varepsilon_n}\right)^{n\gamma}, \]
hold true.
\end{theorem}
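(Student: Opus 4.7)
My strategy is to reduce Theorem \ref{thmnonpertana} to Theorem \ref{thmnonpert} via the analytization coupling introduced in \cite{LM05}. The extra degree of freedom in the analytic case ($n$ versus $n-1$ in the Gevrey case, reflected in the exponent $n-3$ rather than $n-2$) is precisely the cost of this coupling, and the $n$-dependent constant $c_n$ comes from the Fourier cutoff that enters the analytization process.

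First I would apply Theorem \ref{thmnonpert} on $n-1$ degrees of freedom to the quasi-convex integrable part $\tilde h(\tilde I)=\demi(\tilde I_1^2+\cdots+\tilde I_{n-2}^2)+\tilde I_{n-1}$, obtaining a Gevrey perturbation $\tilde f_{n-1}$ built as a chain of compactly supported bumps localized near a sequence of simple resonances, and a solution of $\tilde h+\tilde f_{n-1}$ drifting by at least $1$ in a time polynomial in the inverse of its Gevrey norm. Next, I would couple with the extra pair $(\theta_n,I_n)$: each Gevrey bump is replaced by a Fourier-truncated approximation (the truncation order of size $\sim n\ln n$, which will produce the doubly-logarithmic factor $\ln(n\ln 2n)$) multiplied by a fast analytic oscillator in $\theta_n$. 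Choosing the frequency of the oscillator to realize a simple resonance, a Poincaré section argument shows that the averaged dynamics on that section reproduces the Gevrey flow of the previous step up to an exponentially small analytic remainder. The drifting orbit of the Gevrey system then lifts to a drifting orbit of $H_n=h+f_n$, with the drifting time enlarged by the period of the extra oscillator, yielding the bound $\tau_n\leq C(c_n/\varepsilon_n)^{n\gamma}$.

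The main obstacle, and the place where the constants $\rho,\gamma,C$ and $c_n$ get fixed, is a three-way balance. The analytic norm $|f_n|_\rho$ on the complex strip $V_\rho$ must stay comparable to the Gevrey norm of $\tilde f_{n-1}$; the truncation/averaging error must be smaller than the individual pushes produced by each bump in order not to destroy the drifting mechanism; and the averaging period must remain polynomial in $\varepsilon_n^{-1}$ so that the final estimate on $\tau_n$ stays of the stated form. Matching these three constraints simultaneously is what forces both the $\ln(n\ln 2n)$ factor in the exponent and the loss of one degree of freedom in $n-3$. Finally, to ensure that $\rho,\gamma,C,n_0$ depend only on $R$ and $\sigma$ (and not on $n$), the analytic width $\rho$ must be selected as a function of $\sigma$ alone, and the exponential smallness of the coupling remainder must be made uniform in $n$, essentially as in the technical core of \cite{LM05}.
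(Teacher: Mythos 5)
Your plan does not match the paper's proof and, more importantly, it has a genuine gap at its core. You propose to take the Gevrey perturbation produced by Theorem~\ref{thmnonpert}, replace the compactly supported Gevrey bumps by Fourier-truncated analytic approximations, and then couple with a fast oscillator in $\theta_n$ to ``average away'' the discrepancy. This cannot work as stated because the coupling lemma (Lemma~\ref{coupling}) demands the \emph{exact} synchronization conditions~(\ref{sync}): $g$ and $dg$ must vanish \emph{identically} at every point $G^k(a)$ with $1\le k\le q-1$. In the Gevrey construction this exactness is obtained from the bump function $\varphi_{\alpha,L}$ (Lemma~\ref{lemmeGev1}), which has no analytic counterpart — an analytic function vanishing on an open set is zero. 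A Fourier truncation of a bump will not vanish at the required points, the iterates of $\Psi$ will immediately leave the invariant slice $\A^m\times\{a\}$, and there is no reason an ``exponentially small analytic remainder'' can be controlled over the $\tau_n\sim\varepsilon_n^{-n\gamma}$ iterations needed to produce a drift of order one. You would be trying to shadow an orbit over an exponentially long time with no hyperbolicity to invoke, and nothing in your sketch addresses this.

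The paper's actual route in Section~4 is structurally different. Since compactly supported coupling functions are unavailable, the pendulum factor cannot be hidden inside the $G$-side of the coupling as in the Gevrey case; instead it is moved to the $F$-side, so that $F$ becomes a two-dimensional map on $\A^2$ (hence $m=2$, $m'=n-3$, which is the real origin of the $n-3$ in the exponent — not an analyticity tax on Theorem~\ref{thmnonpert}). On the $F$-side one uses the Lochak–Marco family $\mathcal{F}_q=\Phi^{q^{-1}f_q}\circ\mathcal{F}_*$ (equation~(\ref{Fq})), and Proposition~\ref{LocMar} — proved via analytic normally hyperbolic conjugation and Easton's correctly-aligned windows — gives a genuine wandering orbit for $\mathcal{F}_q$ with quantitative drift~(\ref{timeana}). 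The $G$-side is then a plain integrable rotation $G_n=\Phi^{\frac12(I_3^2+\cdots+I_{n-1}^2)}$, and the coupling function $g_n$ is a product of the trigonometric polynomials $\eta_p$, which satisfy the exact vanishing conditions because $\eta_p(k/p)=\eta_p'(k/p)=0$ identically (Proposition~\ref{pertuana}). Finally the discrete-to-continuous passage is the Kuksin--Pöschel suspension theorem (Proposition~\ref{susana}), which realizes $\Psi_n$ \emph{exactly} as a Poincaré return map of an autonomous Hamiltonian flow — not an averaging or approximation argument. The $n$-dependent constant $c_n$ comes from the constant $\delta_n$ in~(\ref{taillesusana}), not from any Fourier cutoff. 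So the two missing ideas in your sketch are precisely the Lochak--Marco windowing result for the $\A^2$ drifting map and the Kuksin--Pöschel exact suspension; without them the argument does not close.
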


In the above statement, the constant $\sigma$ has to be chosen sufficiently small but independently of the choice of $n$ and $R$ (see Proposition~\ref{LocMar}). Moreover, the theorem is slightly different than the one in the Gevrey case, since there is a constant $c_n$ depending also on $n$: this comes from the use of suspension arguments due to Kuksin and Pöschel (\cite{Kuk93}, \cite{KP94}) in the analytic case, which are more difficult than in the Gevrey case.

Here we can also define a threshold of validity in the Nekhoroshev theorem $\varepsilon_0(\mathcal{H})=\varepsilon_0(n,R,\rho,M,m)$ and state the following corollary of our Theorem~\ref{thmnonpertana}.

\begin{corollary}
With the previous notations, one has the upper bound
\[ \varepsilon_0(\mathcal{H})<e^{-2(n-3)\ln (4n\ln 2n)}. \]
\end{corollary}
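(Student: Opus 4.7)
The plan is to derive the corollary from Theorem~\ref{thmnonpertana} via a standard contradiction between polynomial drift and exponential stability.

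First, I verify that the reference integrable Hamiltonian $h(I) = \demi(I_1^2 + \cdots + I_{n-1}^2) + I_n$ lies in the class $\mathcal{H}(n,R,\rho,M,m)$: derivatives of $h$ of orders $1$ through $3$ are uniformly bounded on $B$, so the $M$-bound holds for any $M$ large enough. For quasi-convexity, if $\nabla h(I)\cdot v = 0$ then $v_n = -\sum_{i<n} I_i v_i$, so $v_n^2 \leq R^2 \sum_{i<n} v_i^2$ and
\[ \nabla^2 h(I) v \cdot v = \sum_{i<n} v_i^2 \geq (1+R^2)^{-1}|v|^2. \]
Thus $h \in \mathcal{H}$ with $m = (1+R^2)^{-1}$.

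Next, suppose for contradiction that $\varepsilon_0(\mathcal{H}) \geq e^{-2(n-3)\ln(4n\ln 2n)}$ for some $n \geq n_0$. Apply Theorem~\ref{thmnonpertana} to produce $f_n \in \mathcal{A}_\rho$ of size $\varepsilon_n = |f_n|_\rho$ in the prescribed range, together with an orbit of $H_n = h+f_n$ satisfying $|I(\tau_n)-I_0|\geq 1$ for some $\tau_n \leq C(c_n/\varepsilon_n)^{n\gamma}$. Granting that $\varepsilon_n < \varepsilon_0(\mathcal{H})$, Nekhoroshev's theorem applied to $H_n$ yields $|I(t)-I_0|\leq c_1\varepsilon_n^b$ whenever $|t|\leq c_2\exp(c_3\varepsilon_n^{-a})$. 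For $n \geq n_0$, $\varepsilon_n$ is so small that $c_1\varepsilon_n^b<1$; simultaneously, the polynomial factor $(c_n/\varepsilon_n)^{n\gamma}$ is dwarfed by the exponential $\exp(c_3\varepsilon_n^{-a})$, so $\tau_n$ lies well inside the Nekhoroshev stability window. Nekhoroshev's bound then forces $|I(\tau_n)-I_0|<1$, contradicting the theorem.

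The main delicate point is arranging $\varepsilon_n < \varepsilon_0(\mathcal{H})$ from the sole assumption $\varepsilon_0(\mathcal{H}) \geq e^{-2(n-3)\ln(4n\ln 2n)}$. The theorem's upper bound $\varepsilon_n \leq c_n e^{-2(n-3)\ln(n\ln 2n)}$ exceeds the target by a factor of $16^{n-3}/c_n$, so the inequality cannot be read off directly. One must therefore revisit the construction underlying Theorem~\ref{thmnonpertana} and check that $\varepsilon_n$ may be tuned arbitrarily close to its lower bound $e^{-2(n-3)\ln(4n\ln 2n)}$ (either by adjusting the parameters of the construction or by a rescaling of $f_n$ that preserves the polynomial-time instability). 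Once this flexibility is established, the Nekhoroshev contradiction produced above yields the corollary; the strict inequality in the statement follows from the fact that the Nekhoroshev threshold is defined by the strict bound $|f|<\varepsilon_0$.
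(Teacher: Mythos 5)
The paper does not give a proof of this corollary; it is stated bare, as an immediate consequence of Theorem~\ref{thmnonpertana}. Your proposal supplies the natural argument — checking $h\in\mathcal{H}$, then a contradiction between polynomial-time drift and the Nekhoroshev estimate — so the approach is the one the paper intends, and you correctly identified the main difficulty, namely that the theorem places $\varepsilon_n$ in a \emph{window} $\bigl[e^{-2(n-3)\ln(4n\ln 2n)},\,c_n e^{-2(n-3)\ln(n\ln 2n)}\bigr]$ whose upper endpoint exceeds the target bound $e^{-2(n-3)\ln(4n\ln 2n)}$ by the factor $16^{n-3}c_n$, so the strict inequality $\varepsilon_n<\varepsilon_0$ needed to invoke Nekhoroshev does not follow from $\varepsilon_0\geq e^{-2(n-3)\ln(4n\ln 2n)}$. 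You flag this honestly as a point that must be repaired by tuning the construction, which is appropriate; the paper itself silently conflates the lower and upper ends of the window when passing from theorem to corollary.

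However, your proof contains a second, unexamined gap which is more serious: the assertion that ``the polynomial factor $(c_n/\varepsilon_n)^{n\gamma}$ is dwarfed by the exponential $\exp(c_3\varepsilon_n^{-a})$''. For quasi-convex Hamiltonians the Nekhoroshev exponent $a$ scales as $n^{-1}$, as the paper recalls in the introduction, and the class $\mathcal{H}$ is $n$-dependent, so one cannot treat $a$ as a fixed positive constant. Plugging $\varepsilon_n\sim e^{-2(n-3)\ln(4n\ln 2n)}$ and $a\sim (2n)^{-1}$ into the two sides gives
\[
\ln\tau_n \;\lesssim\; n\gamma\,\ln(1/\varepsilon_n)\;\sim\; n^2\ln(n\ln n),
\qquad
c_3\,\varepsilon_n^{-a}\;\sim\; c_3\,\sqrt{n\ln n},
\]
so the drift time $\tau_n$ is in fact \emph{larger} than the guaranteed stability time $c_2\exp(c_3\varepsilon_n^{-a})$, not smaller. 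Since the drift per iterate in the construction is linear (of order $q_n^{-2}$ per step), the actual displacement reached by time $c_2\exp(c_3\varepsilon_n^{-a})$ is far below $c_1\varepsilon_n^{b}$, so there is no formal contradiction with the Nekhoroshev estimate as stated. To make the contradiction stick you would either need the exponents $a,b$ of the class $\mathcal{H}$ to be $n$-independent (in which case the exponential genuinely dwarfs $\tau_n$), or you would need a sharper comparison between the two time scales; neither is spelled out in your proposal, and since the paper also omits the argument you cannot simply defer to it. At minimum, the step ``$\tau_n$ lies well inside the Nekhoroshev stability window'' must be justified with the $n$-dependence of the exponents made explicit.
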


This improves the upper bound $\varepsilon_0(\mathcal{H})<e^{-n}$ obtained in \cite{BK05} for analytic functions. For concrete Hamiltonians like in the planetary problem, the actual distance to the integrable system is essentially of order $10^{-3}$, hence the above corollary yields the impossibility to apply Nekhoroshev's estimates for $n>3$. 

\subsection{Some remarks}

\paraga Theorem~\ref{thmnonpert} and Theorem~\ref{thmnonpertana} are obtained from the constructions in \cite{MS02} and \cite{LM05}, but one has to choose properly the dependence with respect to $n$ of the various parameters involved. 

As the reader will see, we will use only rough estimates leading to the factor $n$ in the time of instability: this can be easily improved but we do not know if it is possible in our case (that is with a perturbation of size $e^{-n\ln (n\ln n)}$) to obtain a linear time of drift.

Let us note also we have restricted the perturbation to a compact subset of $\A^n=\T^n \times \R^n$ just in order to evaluate Gevrey or analytic norms. In fact, in both theorems the Hamiltonian vector field generated by $H_n=h+f_n$ is complete and the unstable solution $(\theta(t),I(t))$ satisfies 
\[ \lim_{t\rightarrow \pm \infty}|I(t)-I_0|=+\infty, \]
which means that it is bi-asymptotic to infinity.

\paraga Ii is important to note that our approach leads, as in the first part of \cite{BK05}, to results for an autonomous perturbation of a quasi-convex integrable system (or, equivalently, for a time-dependent time-periodic perturbation of a convex integrable system). In the second part of \cite{BK05}, for a class of convex integrable systems, Kaloshin was able to reduce the case of an autonomous perturbation to the case of a time-dependent time periodic perturbation, partly because of his more general (but more involved) approach. We could have tried to apply his general arguments to our case, but for simplicity we decided not to pursue this further.

\paraga In this text, we will have to deal with time-one maps associated to Hamiltonian flows. So given a function $H$, we will denote by $\Phi_t^H$ the time-$t$ map of its Hamiltonian flow and by $\Phi^H=\Phi^H_1$ the time-one map. We shall use the same notation for time-dependent functions $H$, that is $\Phi^H$ will be the time-one map of the Hamiltonian isotopy (the flow between $t=0$ and $t=1$) generated by $H$.  

\section{Proof of Theorem~\ref{thmnonpert}}

The proof of Theorem~\ref{thmnonpert} is contained in section~\ref{sectnonpert}, but first in section~\ref{mechanism} we recall the mechanism of instability presented in the paper \cite{MS02} (see also \cite{MS04}). 

This mechanism has two main features. The first one is that it deals with perturbation of integrable maps rather than perturbation of integrable flows, and then the latter is recovered by a suspension process. This point of view, which is only of technical matter, was already used for example in \cite{Dou88} and offers more flexibility in the construction. The second feature, which is the most important one, is that instead of trying to detect instability in a map close to integrable by means of the usual splitting estimates, we will start with a map having already ``unstable" orbits and try to embed it in a near-integrable map. This will be realized through a ``coupling lemma", which is really the heart of the mechanism. 

As we will see, the construction offers an easy and very efficient way of computing the drifting time of unstable solutions, therefore avoiding all the technicalities that are usually required for such a task. 

\subsection{The mechanism} \label{mechanism}

\paraga Given a potential function $U : \T \rightarrow \R$, we consider the following family of maps $\psi_q : \A \rightarrow \A$ defined by
\begin{equation}\label{maps1}
\psi_q(\theta,I)=\left(\theta +qI, I-q^{-1}U'(\theta + qI)\right), \quad (\theta,I)\in\A, 
\end{equation}
for $q \in \N^*$. If we require $U'(0)=-1$, for example if we choose 
\[ U(\theta)=-(2\pi)^{-1}\sin (2\pi\theta),\quad U'(\theta)=-\cos(2\pi\theta),\] 
then it is easy to see that $\psi_q(0,0)=(0,q^{-1})$ and by induction 
\begin{equation}\label{driftstand}
\psi_q^k(0,0)=(0,kq^{-1}) 
\end{equation}
for any $k \in \Z$ (see figure~\ref{drift}). After $q$ iterations, the point $(0,0)$ drifts from the circle $I=0$ to the circle $I=1$ and it is bi-asymptotic to infinity, in the sense that the sequence $\left(\psi_q^k(0,0)\right)_{k \in \Z}$ is not contained in any semi-infinite annulus of $\A$. 
\begin{figure}[t] 
\centering
\def\JPicScale{0.7}
\input{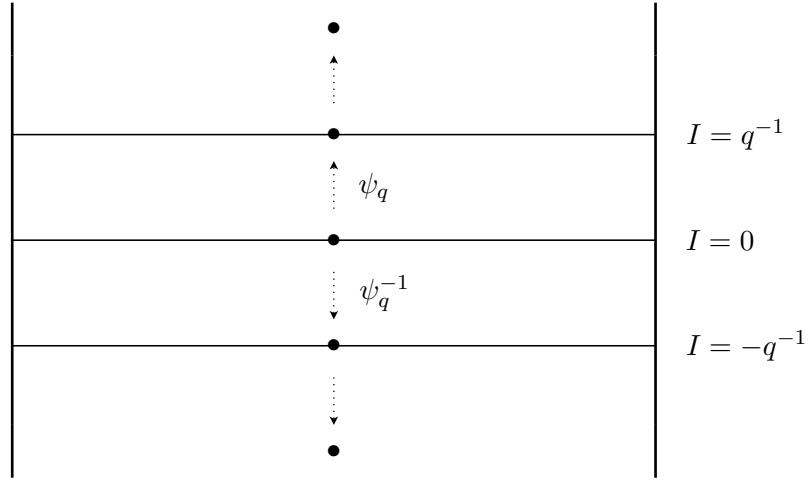}
\caption{Drifting point for the map $\psi_q$} \label{drift}
\end{figure}
Clearly these maps are exact-symplectic, but obviously they have no invariant circles and so they cannot be ``close to integrable". However, we will use the fact that they can be written as a composition of time-one maps,
\begin{equation}\label{driftstand2}
\psi_q=\Phi^{q^{-1}U} \circ \left(\Phi^{\frac{1}{2}I^2} \circ \cdots \circ \Phi^{\frac{1}{2}I^2}\right) =\Phi^{q^{-1}U} \circ \left(\Phi^{\frac{1}{2}I^2}\right)^q, 
\end{equation}
to embed $\psi_q$ in the $q^{th}$-iterate of a near-integrable map of $\A^n$, for $n \geq 2$. To do so, we will use the following ``coupling lemma", which is easy but very clever. 

\begin{lemma}[Herman-Marco-Sauzin] \label{coupling}
Let $m,m' \geq 1$, $F : \A^m \rightarrow \A^m$ and $G : \A^{m'} \rightarrow \A^{m'}$ two maps, and $f : \A^m \rightarrow \R$ and $g : \A^{m'} \rightarrow \R$ two Hamiltonian functions generating complete vector fields. Suppose there is a point $a \in \A^{m'}$ which is $q$-periodic for $G$ and such that the following ``synchronisation" conditions hold:
\begin{equation}\label{sync}
g(a)=1, \quad dg(a)=0, \quad g(G^k(a))=0, \quad dg(G^k(a))=0, \tag{S}
\end{equation} 
for $1 \leq k \leq q-1$. Then the mapping
\[ \Psi=\Phi^{f \otimes g} \circ (F \times G) : \A^{m+m'} \longrightarrow \A^{m+m'} \]
is well-defined and for all $x \in \A^m$, 
\[ \Psi^q(x,a)=(\Phi^f \circ F^q(x),a). \]
\end{lemma}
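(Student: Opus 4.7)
The plan is to identify the Hamiltonian vector field of $f \otimes g$ on $\A^{m+m'}$ and extract two elementary identities from the synchronisation condition. Writing $(f \otimes g)(x,y) = f(x)g(y)$ and using the product symplectic form, a direct computation gives that the Hamiltonian vector field of $f \otimes g$ equals
\[
\bigl(g(y)\, X_f(x),\; f(x)\, X_g(y)\bigr),
\]
where $X_f$ and $X_g$ denote the Hamiltonian vector fields of $f$ and $g$. Since $X_f$ and $X_g$ are complete by hypothesis, so is this coupled vector field, and the time-one map $\Phi^{f \otimes g}$ is everywhere defined.

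The crux of the argument is the observation that if a point $b \in \A^{m'}$ satisfies $dg(b)=0$, then the second component of the coupled vector field vanishes identically at every $(x,b)$. By Picard--Lindel\"of uniqueness, the constant curve $y \equiv b$ is then the full $y$-trajectory issuing from $(x,b)$, so the first component evolves under the autonomous vector field $g(b)\, X_f$. Applying this to the two kinds of points appearing in (\ref{sync}) yields the key identities
\[
\Phi^{f \otimes g}(x,b) = (x,b)\quad\text{whenever } g(b)=0,\ dg(b)=0,
\]
and
\[
\Phi^{f \otimes g}(x,a) = \bigl(\Phi^f(x),\, a\bigr),\quad\text{using } g(a)=1,\ dg(a)=0.
\]

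With these in hand, the conclusion is a direct induction. Set $a_k = G^k(a)$, so that $a_0 = a_q = a$ while $a_1,\ldots,a_{q-1}$ are the intermediate points on which $g$ and $dg$ both vanish. I would check by induction on $k \in \{1,\dots,q-1\}$ that
\[
\Psi^k(x,a) = \bigl(F^k(x),\, a_k\bigr),
\]
because at each step the map $F \times G$ advances both factors by one, and the subsequent $\Phi^{f \otimes g}$ lands on a point $(F^k(x), a_k)$ where the first identity above makes it the identity. At the $q$-th iteration, the $G$-component returns to $a$ where $g(a)=1$, so the second identity produces exactly $(\Phi^f \circ F^q(x),\, a)$, as claimed.

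The genuine ``obstacle'' is conceptual rather than computational: one has to recognise that condition (\ref{sync}) is tailor-made to decouple the flow of $f \otimes g$ along the $G$-orbit of $a$, with $g \equiv 0$ freezing the $x$-factor at the intermediate points and $dg \equiv 0$ pinning the trajectory in the $y$-factor throughout. Once this is seen, the proof reduces to bookkeeping and one application of ODE uniqueness; the real difficulty of the \cite{MS02} mechanism lies downstream, in producing functions $f$, $g$ and a periodic point $a$ that jointly satisfy (\ref{sync}) while keeping $\Phi^{f \otimes g} \circ (F \times G)$ close to integrable with controllable Gevrey or analytic norms.
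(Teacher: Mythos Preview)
Your argument is correct and matches the paper's approach. The paper packages the flow computation into a single general identity $\Phi_t^{f\otimes g}(x,x')=(\Phi_t^{g(x')f}(x),\Phi_t^{f(x)g}(x'))$, valid for \emph{all} initial data because $f$ and $g$ are separately conserved along the coupled flow (this also yields completeness of $X_{f\otimes g}$, which you assert but do not justify); after that the induction along the $G$-orbit of $a$ is identical to yours.
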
  

The product of functions acting on separate variables was denoted by $\otimes$, \textit{i.e.}
\[ f \otimes g(x,x')=f(x)g(x') \quad  x \in \A^m , \; x' \in \A^{m'}. \]
Let us give an elementary proof of this lemma since it is a crucial ingredient.

\begin{proof}
First note that since the Hamiltonian vector fields $X_f$ and $X_g$ are complete, an easy calculation shows that for all $x\in\A^m$, $x'\in\A^{m'}$ and $t\in \R$, one has 
\begin{equation}\label{coupl}
\Phi_{t}^{f \otimes g}(x,x')=\left(\Phi_{t}^{g(x')f}(x),\Phi_{t}^{f(x)g}(x')\right) 
\end{equation}
and therefore $X_{f \otimes g}$ is also complete. Using the above formula and condition~(\ref{sync}), the points $(F^{k}(x),G^{k}(a))$, for $1\leq k \leq q-1$, are fixed by $\Phi^{f \otimes g}$ and hence
\[ \Psi^{q-1}(x,a)=(F^{q-1}(x),G^{q-1}(a)). \]
Since $a$ is $q$-periodic for $G$ this gives
\[ \Psi^{q}(x,a)=\Phi^{f \otimes g}(F^{q}(x),a), \]
and we end up with
\[ \Psi^{q}(x,a)=(\Phi^{f}(F^{q}(x)),a) \]
using once again~(\ref{sync}) and~(\ref{coupl}).
\end{proof}

Therefore, if we set $m=1$, $F=\Phi^{\frac{1}{2}I_1^2}$ and $f=q^{-1}U$ in the coupling lemma, the $q^{th}$-iterate $\Psi^q$ will leave the submanifold $\A \times \{a\}$ invariant, and its restriction to this annulus will coincide with our ``unstable map" $\psi_q$. Hence, after $q^2$ iterations of $\Psi$, the $I_1$-component of the point $((0,0),a) \in \A^2$ will move from $0$ to $1$. 

\paraga The difficult part is then to find what kind of dynamics we can put on the second factor to apply this coupling lemma. In order to have a continuous system with $n$ degrees of freedom at the end, we may already choose $m'=n-2$ so the coupling lemma will give us a discrete system with $n-1$ degrees of freedom.

First, a natural attempt would be to try 
\[ G=G_n=\Phi^{\frac{1}{2}I_2^2+\cdots+\frac{1}{2}I_{n-1}^2}.\]
Indeed, in this case
\[ F \times G_n=\Phi^{\frac{1}{2}I_2^2+\cdots+\frac{1}{2}I_{n-1}^2}=\Phi^{\tilde{h}} \] 
where 
\[ \tilde{h}(I_1,\dots,I_{n-1})=\frac{1}{2}(I_1^2+\cdots+I_{n-1}^2) \] 
and the unstable map $\Psi$ given by the coupling lemma appears as a perturbation of the form $\Psi=\Phi^u \circ \Phi^{\tilde{h}}$, with $u=f\otimes g$. However, this cannot work. Indeed, for $j\in\{2,\dots,n-1\}$, one can choose a $p_j$-periodic point $a^{(j)}\in\A$ for the map $\Phi^{\frac{1}{2}I_j^2}$, and then setting 
\[ a_n=(a^{(2)},\dots,a^{(n-1)})\in\A^{n-2}, \quad q_n=p_2 \cdots p_{n-1},\] 
the point $a_n$ is $q_n$-periodic for $G_n$ provided that the numbers $p_j$ are mutually prime. One can easily see that the latter condition will force the product $q_n$ to converge to infinity when $n$ goes to infinity. So necessarily the point $a_n$ gets arbitrarily close to its first iterate $G_n(a_n)$ when $n$ (and therefore $q_n$) is large: this is because $q_n$-periodic points for $G_n$ are equi-distributed on $q_n$-periodic tori. As a consequence, a function $g_n$ with the property
\[ g_n(a_n)=1, \quad g_n(G_n(a_n))=0,  \]
will necessarily have very large derivatives at $a_n$ if $q_n$ is large. Then as the size of the perturbation is essentially given by 
\[ |f\otimes g_n|=|q_{n}^{-1}U\otimes g_n|=|q_{n}^{-1}||g_n|,\] 
one can check that it is impossible to make this quantity converge to zero when the number of degrees of freedom $n$ goes to infinity.

\paraga As in \cite{MS02}, the idea to overcome this problem is the following one. We introduce a new sequence of ``large" parameters $N_n \in \N^*$ and in the second factor we consider a family of suitably rescaled penduli on $\A$ given by
\[ P_n(\theta_2,I_2)=\demi I_2^2 +N_n^{-2}V(\theta_2), \]
where $V(\theta)=-\cos 2\pi\theta$. The other factors remain unchanged, so 
\[ G_n=\Phi^{\frac{1}{2}(I_2^2+I_3^2+\cdots+I_{n-1}^2)+N_{n}^{-2}V(\theta_2)}. \]
In this case, the map $\Psi$ given by the coupling lemma is also a perturbation of $\Phi^{\tilde{h}}$ but of the form $\Psi=\Phi^u \circ \Psi^{\tilde{h}+v}$, with $v=N_n^{-2}V$. But now for the map $G_n$, due to the presence of the pendulum factor, it is now possible to find a periodic orbit with an irregular distribution: more precisely, a $q_n$-periodic point $a_n$ such that its distance to the rest of its orbit is of order $N_{n}^{-1}$, no matter how large $q_n$ is. 

\paraga Let us denote by $(p_j)_{j\geq 0}$ the ordered sequence of prime numbers and let us choose $N_n$ as the product of the $n-2$ prime numbers $\{p_{n+3},\dots,p_{2n}\}$, that is
\[ N_n=p_{n+3}p_{n+4}\cdots p_{2n}\in\N^*.\]
Our goal is to prove the following proposition.

\begin{proposition}\label{pertu} 
Let $n\geq 3$, $\alpha>1$ and $L_1>0$. Then there exist a function $g_n\in G^{\alpha,L_1}(\A^{n-2})$, a point $a_n\in\A^{n-2}$ and positive constants $c_1$ and $c_2$ depending only on $\alpha$ and $L_1$ such that if
\[ M_n=2\left[c_1N_ne ^{c_2(n-2)p_{2n}^{\frac{1}{\alpha}}}\right], \quad q_n=N_nM_n, \]
then $a_n$ is $q_n$-periodic for $G_n$ and $(g_n, G_n, a_n, q_n)$ satisfy the synchronization conditions (\ref{sync}): 
\begin{equation*}
g_n(a_n)=1, \quad dg_n(a_n)=0, \quad g_n(G_n^k(a_n))=0, \quad dg_n(G_n^k(a_n))=0, 
\end{equation*} 
for $1 \leq k \leq q_n-1$. Moreover, the estimate
\begin{equation}\label{estimgn} 
q_n^{-1}|g_n|_{\alpha,L_1}\leq N_{n}^{-2}, 
\end{equation}
holds true.
\end{proposition}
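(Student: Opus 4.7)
The plan is to follow the Herman--Marco--Sauzin scheme: construct $a_n$ factor by factor and design $g_n$ as a tensor product of Gevrey ``tag'' functions, each adapted to one factor.

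First I would build the periodic orbit structure. For the pendulum factor ($j=2$), I exploit the fact that the rescaled pendulum $P_n=\demi I_2^2+N_n^{-2}V(\theta_2)$ has separatrix energy $E_s=N_n^{-2}$, and that librations at energy $E_s(1-\delta)$ have continuous period $T(\delta)\sim N_n\log(1/\delta)$. By the intermediate value theorem applied to $T$, I pick $\delta_n$ so that $T(\delta_n)=M_n$, producing an $M_n$-periodic orbit of $\Phi^{P_n}$. Although $M_n\gg N_n$, only $O(N_n)$ iterates of this orbit lie in the fast phase along the separatrix (spaced by $\sim N_n^{-1}$ in $\theta$), while the remaining $\sim M_n$ iterates cluster exponentially close to the hyperbolic equilibrium $(1/2,0)$. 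Choosing $a^{(2)}$ in the middle of the fast phase, say near $(0,2N_n^{-1})$, gives $d_\A(a^{(2)},\Phi^{P_n,k}(a^{(2)}))\geq cN_n^{-1}$ for all $1\leq k\leq M_n-1$. For each rotator factor ($3\leq j\leq n-1$), I take $a^{(j)}\in\A$ to be a periodic point of $\Phi^{I_j^2/2}$ with prime period chosen from $\{p_{n+3},\dots,p_{2n}\}$; arranging the prime assignments and $M_n$ so that all periods are pairwise coprime makes $a_n=(a^{(2)},\dots,a^{(n-1)})$ exactly $q_n=N_nM_n$-periodic for $G_n$.

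The separation estimate is then immediate: for every $1\leq k\leq q_n-1$ at least one component of $G_n^k(a_n)$ differs from the corresponding component of $a_n$, by $\geq 1/p_{2n}$ in a rotator factor, or by $\geq cN_n^{-1}$ in the pendulum factor, yielding $d_{\A^{n-2}}(a_n,G_n^k(a_n))\gtrsim N_n^{-1}$ uniformly. I would then define $g_n=\bigotimes_{j=2}^{n-1}u_j$, where each $u_j:\A\to\R$ is a Gevrey bump with $u_j(a^{(j)})=1$, $du_j(a^{(j)})=0$, and $u_j$ together with its first derivative vanishing outside a small neighbourhood of $a^{(j)}$ of radius just smaller than the minimum distance from $a^{(j)}$ to the other iterates in the $j$-th factor. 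The synchronisation conditions~(\ref{sync}) follow directly from this tensor product structure, since for any $1\leq k\leq q_n-1$ at least one coordinate of $G_n^k(a_n)$ falls outside the support of the corresponding $u_j$.

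Finally, I would control the Gevrey norm via submultiplicativity of $|\,\cdot\,|_{\alpha,L_1}$ under tensor products, $|g_n|_{\alpha,L_1}\leq\prod_{j=2}^{n-1}|u_j|_{\alpha,L_1}$. Applying the Gevrey bump estimates of \cite{MS02}, each $|u_j|_{\alpha,L_1}$ can be made $\leq C\exp(c\,p_{2n}^{1/\alpha})$ after a suitable calibration; taking the product over the $n-2$ factors gives $|g_n|_{\alpha,L_1}\leq\exp(c(n-2)p_{2n}^{1/\alpha})$. Dividing by $q_n=N_nM_n$ and plugging in the explicit form of $M_n$ yields $q_n^{-1}|g_n|_{\alpha,L_1}\leq N_n^{-2}$, with constants $c_1,c_2$ chosen in terms of $\alpha$ and $L_1$.

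The main obstacle is the quantitative Gevrey norm estimate. Since the pendulum factor $u_2$ must distinguish $a^{(2)}$ from orbit iterates at the small scale $N_n^{-1}$, a naive bump estimate would contribute a prohibitive cost of order $\exp(cN_n^{1/(\alpha-1)})$. Reconciling this with the target bound $\exp(c(n-2)p_{2n}^{1/\alpha})$ uses essentially that the irregular concentration of the pendulum orbit (only $O(N_n)$ fast-phase iterates, the rest clustered at the hyperbolic equilibrium) allows the radius of $u_2$'s support to be chosen in a way compatible with a Gevrey norm dominated by $n-2$ ``rotator-sized'' contributions of scale $p_{2n}^{-1}$, rather than by one expensive bump at scale $N_n^{-1}$. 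Carrying out this balancing rigorously using the explicit Gevrey bump construction of \cite{MS02} is the most delicate technical step.
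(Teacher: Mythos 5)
Your construction of the periodic point $a_n$ is essentially the paper's, but the design of $g_n$ has a genuine gap that cannot be repaired by ``carrying out the balancing rigorously.'' The problem is in the pendulum factor. You take $u_2$ to be a Gevrey bump supported in a neighbourhood of $a^{(2)}$ of radius just smaller than the minimum distance to the other iterates in factor $2$. Since the $\sim N_n$ iterates inside $\{|\theta_2|\le\sigma\}$ are spaced $\sim N_n^{-1}$ apart, this radius is $\sim N_n^{-1}$, and a Gevrey bump at that scale costs $\exp\bigl(cN_n^{1/(\alpha-1)}\bigr)$. With $N_n\sim p_{2n}^{\,n-2}$, this is vastly larger than the target $\exp\bigl(c(n-2)p_{2n}^{1/\alpha}\bigr)$, and no clever support-choice salvages it: the expensive iterates are precisely the $\sim N_n$ evenly-spaced ones inside the strip, and your $u_2$ is asked to kill all of them by itself. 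The ``clustering near the hyperbolic equilibrium'' you invoke only helps with iterates \emph{outside} the strip, which a cheap $O(\sigma)$-sized bump already handles; it does nothing to reduce the density inside the strip.

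The paper's construction is structurally different in the one step you treated as a technicality. It does \emph{not} make $g_n^{(2)}$ vanish at every iterate in factor $2$. Instead it sets $g_n^{(2)}=\bigl(\eta_{p_{n+3}}\circ\tau_{M_n}\bigr)\cdot\varphi$, where $\tau_{M_n}$ is the analytic inverse of the pendulum orbit parametrisation (uniformly Gevrey-bounded by Lemma~\ref{lemmelambda}) and $\varphi$ is an $O(\sigma)$-scale bump killing everything outside the strip. After composing with $\tau_{M_n}$, the iterates inside the strip sit at the evenly-spaced points $k/N_n$, and $\eta_{p_{n+3}}$ vanishes only at those with $N_n'\mid k$ (where $N_n'=p_{n+4}\cdots p_{2n}$); the remaining iterates $k/N_n$ with $N_n'\nmid k$ are killed \emph{by the other factors} $\eta_{p_{n+1+i}}$, $3\le i\le n-1$, since then some $p_{n+1+i}\nmid k$ and $\theta_k^{(i)}=k/p_{n+1+i}\ne 0$. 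This division of labour across factors, together with the composition estimate of Lemma~\ref{lemmeGev3}, keeps the $\alpha$-Gevrey norm of $g_n^{(2)}$ at $\exp\bigl(cp_{2n}^{1/\alpha}\bigr)$, comparable to the rotator factors. Your proposal keeps each $u_j$ responsible for its own factor's entire orbit, which is exactly what the paper is engineered to avoid; that is not a quantitative refinement but the central idea you are missing.
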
  

The rest of this section is devoted to the proof of the above proposition. Note that together with the coupling lemma (Lemma~\ref{coupling}), this proposition easily gives a result of instability (analogous to Proposition 2.1 in \cite{MS02}) for a discrete system which is a perturbation of the map $\Phi^{\tilde{h}}$, but we prefer not to state such a result in order to focus on the continuous case. 

\paraga We first consider the simple pendulum
\[ P(\theta,I)=\demi I^2 + V(\theta), \quad (\theta,I)\in\A. \]
With our convention, the stable equilibrium is at $(0,0)$ and the unstable one is at $(0,1/2)$. Given any $M\in\N^*$, there is a unique point $b^M=(0,I_M)$ which is $M$-periodic for $\Phi^P$ (this is just the intersection between the vertical line $\{0\} \times \R$ and the closed orbit for the pendulum of period $M$). One can check that $I_M \in\, ]2,3[$ and as $M$ goes to infinity, $(0,I_M)$ tends to the point $(0,2)$ which belongs to the upper separatrix. Since $P_n(\theta,I)=\demi I^2 +N_n^{-2}V(\theta)$, then one can see that
\[ \Phi^{P_{n}}=(S_n)^{-1} \circ \Phi^{N_{n}^{-1}P} \circ S_{n}, \] 
where $S_{n}(\theta,I)=(\theta,N_n I)$ is the rescaling by $N_n$ in the action components. Therefore the point $b_n^M=(0,N_{n}^{-1}I_{M})$ is $q_n$-periodic for $\Phi^{P_{n}}$, for $q_n=N_nM$. Let $(\Phi_t^{P})_{t \in \R}$ be the flow of the pendulum, and 
\[ \Phi_t^{P}(0,I_{M})=(\theta_{M}(t),I_{M}(t)). \]
The function $\theta_{M}(t)$ is analytic. The crucial observation is the following simple property of the pendulum (see Lemma 2.2 in \cite{MS02} for a proof). 
\begin{figure}[t] 
\centering
\def\JPicScale{0.7}
\input{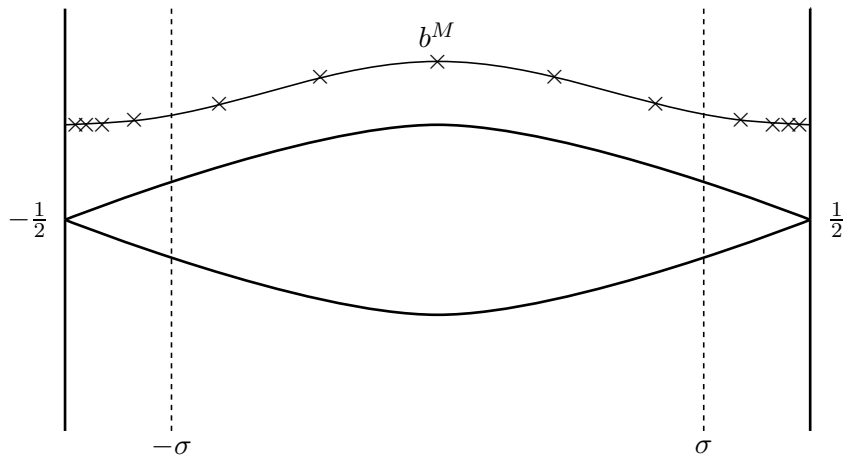}
\caption{The point $b^M$ and its iterates} \label{bN}
\end{figure}

\begin{lemma}
Let $\sigma=-\demi +\frac{2}{\pi}\arctan e^{\pi} < \frac{1}{2}$. For any $M \in \N^*$,  
\[ \theta_{M}(t) \notin [-\sigma,\sigma],  \]
for $t\in[1/2,M-1/2]$.
\end{lemma}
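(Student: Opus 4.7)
My plan is to compare the orbit of $b^M$ with the upper separatrix of the simple pendulum $P$, which can be integrated explicitly, and to exploit the fact that this separatrix reaches $\theta=\sigma$ at time $t=1/2$ by the very definition of $\sigma$.

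Since $I_M\in\,]2,3[$, the orbit of $b^M$ has energy $E_M=I_M^2/2-1>1$ and thus lies strictly outside the separatrix. Along it one has
\[
I_M(t)=\sqrt{2E_M+2\cos(2\pi\theta_M(t))}>0,
\]
so $\theta_M$ is strictly increasing on the universal cover, going from $0$ at $t=0$ to $1$ at $t=M$. Combining the invariance of $P$ under $(\theta,I)\mapsto(-\theta,I)$ with time reversal, one checks that $\theta_M(t)+\theta_M(M-t)=1$ for $t\in[0,M]$. By monotonicity and this symmetry, the lemma reduces to the single inequality $\theta_M(1/2)>\sigma$: granted this, $\theta_M(M-1/2)<1-\sigma$, hence $\theta_M(t)\in(\sigma,1-\sigma)$ for every $t\in[1/2,M-1/2]$, and since $\sigma<1/2$ this arc is disjoint modulo $1$ from $[-\sigma,\sigma]\subset\T$.

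To establish $\theta_M(1/2)>\sigma$, I would compare with the upper separatrix, on which $\dot\theta=\sqrt{2+2\cos(2\pi\theta)}=2\cos(\pi\theta)$ for $\theta\in(-1/2,1/2)$. Along the orbit of $b^M$ one has instead the strict pointwise bound $\dot\theta_M=\sqrt{2E_M+2\cos(2\pi\theta_M)}>2\cos(\pi\theta_M)$ since $E_M>1$, so the rotational orbit reaches any $\theta\in(0,1/2)$ in strictly less time than the separatrix. Integrating $\dot\theta=2\cos(\pi\theta)$ with $\theta(0)=0$ gives $\ln(\sec(\pi\theta)+\tan(\pi\theta))=2\pi t$, and using the identity $\sec x+\tan x=\tan(\pi/4+x/2)$ this rewrites as $\theta(t)=-\demi+\frac{2}{\pi}\arctan(e^{2\pi t})$. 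Evaluating at $t=1/2$ yields $\theta(1/2)=\sigma$ by the very definition of $\sigma$, so the comparison gives $\theta_M(1/2)>\sigma$ and closes the argument.

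I do not foresee any genuine obstacle: $\sigma$ has been calibrated precisely so that the separatrix reaches it at $t=1/2$, and the strict inequality $E_M>1$ is what forces the rotational orbit to move everywhere faster than the separatrix. The only ingredients needing some care are the monotonicity of $\theta_M$ on the universal cover, which lets one convert a pointwise bound at $t=1/2$ into a statement about the whole interval $[1/2,M-1/2]$, and the time-reversal symmetry of the pendulum, which halves the analysis.
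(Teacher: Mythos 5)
Your proof is correct and follows the natural route: a pointwise comparison of the superseparatrix orbit of $b^M$ with the upper separatrix, whose explicit integration is precisely what fixes the value of $\sigma$. The ingredients all check out: $E_M=\tfrac{1}{2}I_M^2-1>1$ since $I_M>2$; $\dot\theta_M=\sqrt{2E_M+2\cos 2\pi\theta_M}>2\cos\pi\theta_M$ on $(-\tfrac12,\tfrac12)$; the separatrix solution $\theta(t)=-\tfrac12+\tfrac{2}{\pi}\arctan e^{2\pi t}$ gives $\theta(\tfrac12)=\sigma$; and the odd symmetry $\theta_M(-t)=-\theta_M(t)$ together with $\theta_M(t+M)=\theta_M(t)+1$ yields $\theta_M(M-t)=1-\theta_M(t)$, so monotonicity reduces the claim to $\theta_M(\tfrac12)>\sigma$ exactly as you say. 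This is essentially the argument of Lemma 2.2 in Marco--Sauzin (to which the paper defers): the constant $\sigma$ is defined as the position at time $\tfrac12$ on the separatrix, and the comparison with faster rotational orbits does the rest.
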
 

Hence no matter how large $M$ is, most of the points of the orbit of $b^M\in\A$ will be outside the set $\{-\sigma \leq \theta \leq \sigma\}\times \R$ (see figure~\ref{bN}). The construction of a function that vanishes, as well as its first derivative, at these points, will be easily arranged by means of a function, depending only on the angle variables, with support in $\{-\sigma \leq \theta \leq \sigma\}$. 

As for the other points, it is convenient to introduce the function 
\[ \tau_{M} : [-\sigma,\sigma] \longrightarrow \, ]-1/2,1/2[ \] 
which is the analytic inverse of $\theta_{M}$. One can give an explicit formula for this map: 
\[ \tau_{M}(\theta)=\int_{0}^{\theta}\frac{d\varphi}{\sqrt{I_M^2-4\sin^2 \pi\varphi}}. \]
In particular, it is analytic and therefore it belongs to $G^{\alpha,L_1}([-\sigma,\sigma])$ for $\alpha\geq 1$ and $L_1>0$, and one can obtain the following estimate (see Lemma 2.3 in \cite{MS02} for a proof).

\begin{lemma}\label{lemmelambda}
For $\alpha>1$ and $L_1>0$, 
\[ \Lambda=\sup_{M\in\N^*}|\tau_M|_{\alpha,L_1}<+\infty. \]
\end{lemma}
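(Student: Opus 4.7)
The plan is to prove the lemma by showing that each $\tau_M$ extends to a holomorphic function on a complex neighbourhood $U$ of $[-\sigma,\sigma]$ which is \emph{independent of $M$}, with a $C^0$-bound on $U$ that is also independent of $M$. Cauchy's inequality will then yield uniform analytic bounds on all derivatives, and the hypothesis $\alpha>1$ will make the resulting Gevrey series converge for any $L_1>0$.

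The first step is the complex extension. The integrand $(I_M^2-4\sin^2\pi\varphi)^{-1/2}$ is singular where $\sin\pi\varphi=\pm I_M/2$. Since $I_M\in \,]2,3[$, the real solutions lie outside $\R$; in the strip $|\mathcal{I}(\varphi)|\leq 1$ the only zeros of $I_M^2-4\sin^2\pi\varphi$ are at the four points $\pm(1/2 + i t_M)$ and $\pm(-1/2 + i t_M)$ with $t_M=\pi^{-1}\mathrm{arccosh}(I_M/2)\geq 0$. As $M$ varies in $\N^*$, $I_M$ stays in $\,]2,3[$, so $t_M$ stays in a bounded interval, and in particular the real parts of all these singularities are $\pm 1/2$, which sits at distance $1/2-\sigma>0$ from $[-\sigma,\sigma]$. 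Fix $r$ with $0<r<(1/2-\sigma)/2$ and set
\[ U=\{\varphi\in\C \; | \; d(\varphi,[-\sigma,\sigma])<r\}.\]
On $U$ the factorisation $I_M^2-4\sin^2\pi\varphi=-4(\sin\pi\varphi-I_M/2)(\sin\pi\varphi+I_M/2)$ together with the uniform lower bound on $|\varphi \mp(1/2+it_M)|$ gives a constant $\kappa>0$, independent of $M$, with $|I_M^2-4\sin^2\pi\varphi|\geq\kappa$ on $U$. Since $U$ is simply connected and the quantity does not vanish on it, we can pick the branch of the square root that is positive on $[-\sigma,\sigma]$, and then $\tau_M$ extends to a holomorphic function on $U$ with $|\tau_M|_{C^0(U)}\leq K$ for some $K>0$ independent of $M$.

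From this uniform complex bound, Cauchy's integral formula applied on a disc of radius $r$ around any point of $[-\sigma,\sigma]$ yields
\[ |\partial^l\tau_M|_{C^0([-\sigma,\sigma])}\leq K\,l!\,r^{-l} \]
for every $l\in\N$ and every $M\in\N^*$. Plugging this into the definition of the Gevrey norm,
\[ |\tau_M|_{\alpha,L_1}=\sum_{l\in\N}L_1^{l\alpha}(l!)^{-\alpha}|\partial^l\tau_M|_{C^0}\leq K\sum_{l\in\N}\left(\frac{L_1^{\alpha}}{r}\right)^{l}(l!)^{1-\alpha}. \]
Because $\alpha>1$, Stirling's formula gives $(l!)^{1-\alpha}\leq C_0^{\,l}\,l^{(1-\alpha)l}$, whose super-exponential decay in $l$ kills any exponential factor $(L_1^{\alpha}/r)^l$. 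Thus the series converges to a bound $\Lambda$ depending only on $\alpha,L_1,\sigma$ but not on $M$, which is exactly the statement.

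The main obstacle is the first step, namely producing the uniform complex neighbourhood and the uniform $C^0$ bound; the delicate point is that $I_M\to 2$, so the singularities $\pm(1/2+it_M)$ approach the real axis, but they approach $\pm 1/2$ rather than $[-\sigma,\sigma]$, and this is precisely why the constant $\sigma$ had to be chosen strictly less than $1/2$ (with the sharp value furnished by the previous lemma). Once this uniform holomorphic extension is in hand, the Cauchy estimate and the Gevrey summation are routine, and the use of $\alpha>1$ rather than $\alpha=1$ is what frees the final bound from any smallness condition on $L_1$.
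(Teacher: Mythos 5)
Your proof is correct. The paper itself does not prove this lemma but defers to \cite{MS02}, Lemma 2.3, and your argument captures the two essential points of any proof of this statement: first, that the singularities of the integrand in the explicit formula for $\tau_M$ lie on the vertical lines $\mathrm{Re}\,\varphi=\pm\frac{1}{2}$ for every $M$, so that even though $I_M\to 2$ and the singularities $\pm\frac{1}{2}\pm it_M$ approach the real axis, they never approach the interval $[-\sigma,\sigma]$ (here $\sigma<\frac{1}{2}$ is essential), giving a holomorphic extension of $\tau_M$ to a complex neighbourhood $U$ of $[-\sigma,\sigma]$ independent of $M$, with a $C^0$ bound on $U$ also independent of $M$; and second, that any function admitting such a uniform analytic extension has, for $\alpha>1$, a finite $G^{\alpha,L_1}$ norm for \emph{every} $L_1>0$ by Cauchy estimates and the super-exponential decay of $(l!)^{1-\alpha}$. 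This is exactly the expected route and is the same mechanism used in \cite{MS02}. One small technical point worth tightening: you fix $U$ as the $r$-neighbourhood of $[-\sigma,\sigma]$ and then apply Cauchy's formula on discs of radius $r$ centred on $[-\sigma,\sigma]$; to be scrupulous one should take $U$ to be the $2r$-neighbourhood (or use discs of radius $r/2$), so that the closed Cauchy discs lie strictly inside the domain of holomorphy. This is a trivial adjustment and does not affect the validity of the argument.
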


Note that $\Lambda$ depends only on $\alpha$ and $L_1$. Under the action of $\tau_{M}$, the points of the orbit of $b^M$ whose projection onto $\T$ belongs to $\{-\sigma \leq \theta \leq \sigma\}$ get equi-distributed, and we can use the following elementary lemma.

\begin{lemma}\label{funct}
For $p \in \N^*$, the analytic function $\eta_p : \T \rightarrow \R$ defined by
\[ \eta_p(\theta)=\left(\frac{1}{p}\sum_{l=0}^{p-1}\cos 2\pi l\theta \right)^2 \]
satisfies
\[ \eta_p(0)=1, \quad \eta_p'(0)=0, \quad \eta_p(k/p)=\eta_p'(k/p)=0, \]
for $1 \leq k \leq p-1$, and 
\[ |\eta_p|_{\alpha,L_1}\leq e^{2\alpha L_1(2\pi p)^{\frac{1}{\alpha}}}. \]
\end{lemma}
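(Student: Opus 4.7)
The plan is to handle the four pointwise identities and the Gevrey norm bound separately, since they are essentially independent.

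First I would observe that $S(\theta):=p^{-1}\sum_{l=0}^{p-1}\cos 2\pi l\theta$ is the real part of the geometric series $p^{-1}\sum_{l=0}^{p-1}e^{2\pi i l\theta}$, which for $\theta\notin\Z$ equals $p^{-1}(e^{2\pi ip\theta}-1)/(e^{2\pi i\theta}-1)$. At $\theta=k/p$ with $1\le k\le p-1$ the numerator vanishes while the denominator does not, so $S(k/p)=0$. Clearly $S(0)=1$, and $S'(0)=-p^{-1}\sum_{l}2\pi l\sin 0=0$. Since $\eta_p=S^2$ and $\eta_p'=2SS'$, the four identities of the lemma follow immediately: $\eta_p(0)=1$, $\eta_p'(0)=2\cdot 1\cdot 0=0$, and at $\theta=k/p$ both $S$ and the product $SS'$ vanish.

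For the Gevrey estimate I would rely on two elementary properties of the norm $|\cdot|_{\alpha,L_1}$. First, for $\alpha\ge 1$ it is a Banach algebra norm, $|fg|_{\alpha,L_1}\le|f|_{\alpha,L_1}|g|_{\alpha,L_1}$; this is a short Leibniz computation reducing to the inequality $\binom{k}{j}^{1-\alpha}\le 1$. Second, by the triangle inequality $|S|_{\alpha,L_1}\le p^{-1}\sum_{l=0}^{p-1}|\cos 2\pi l\theta|_{\alpha,L_1}$. The crude bound $|(\cos 2\pi l\theta)^{(k)}|_{C^0}\le(2\pi l)^k$ then yields
\[|\cos 2\pi l\theta|_{\alpha,L_1}\le \sum_{k\ge 0}\frac{(L_1^\alpha\cdot 2\pi l)^k}{(k!)^\alpha}=\sum_{k\ge 0}\left(\frac{y^k}{k!}\right)^\alpha,\]
where $y:=L_1(2\pi l)^{1/\alpha}$. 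Applying the superadditivity inequality $\sum_{k}a_k^\alpha\le(\sum_k a_k)^\alpha$ (valid for $\alpha\ge 1$ and nonnegative terms, since $x\mapsto x^\alpha$ is superadditive on the positive reals) bounds this by $e^{\alpha y}=e^{\alpha L_1(2\pi l)^{1/\alpha}}$. Since this bound is monotone in $l$, averaging over $0\le l\le p-1$ preserves it, so $|S|_{\alpha,L_1}\le e^{\alpha L_1(2\pi p)^{1/\alpha}}$, and one further application of the Banach algebra property gives $|\eta_p|_{\alpha,L_1}\le |S|_{\alpha,L_1}^2\le e^{2\alpha L_1(2\pi p)^{1/\alpha}}$.

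The only step with any real content is the superadditivity trick that converts the Gevrey-type series into a clean exponential; everything else is bookkeeping around the Dirichlet kernel and Leibniz's formula. Crucially, the dependence on $p$ enters the final bound through the small exponent $(2\pi p)^{1/\alpha}$ rather than $2\pi p$ itself, which is exactly the gain one needs later when the parameters $N_n$, $M_n$ and $q_n$ are chosen as functions of $n$.
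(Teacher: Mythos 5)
Your proof is correct, and it is essentially the standard argument the paper delegates to \cite{MS02} (the paper itself gives no proof, simply citing Marco--Sauzin's Lemma 2.4 as ``trivial''). The pointwise identities are immediate from the vanishing of the Dirichlet-kernel sum $S$ at the nonzero $p$-th roots of unity together with $\eta_p=S^2$, $\eta_p'=2SS'$, exactly as you write; and the Gevrey bound is obtained precisely through the one nontrivial step you isolate, namely the $\ell^1\hookrightarrow\ell^\alpha$ comparison $\sum_k a_k^\alpha\le\bigl(\sum_k a_k\bigr)^\alpha$ for $\alpha\ge1$, which is what produces the exponent $(2\pi p)^{1/\alpha}$ instead of $2\pi p$, followed by the Banach-algebra property (Lemma~\ref{lemmeGev2}) to pass from $S$ to $S^2$.
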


The proof is trivial (see \cite{MS02}, Lemma 2.4). 

\paraga We can now pass to the proof of Proposition~\ref{pertu}.

\begin{proof}[Proof of Proposition~\ref{pertu}]
For $\alpha>1$ and $L_1>0$, consider the bump function $\varphi_{\alpha,L_1}\in G^{\alpha,L_1}(\T)$ given by Lemma~\ref{lemmeGev1} (see Appendix~\ref{Gev}). 

We choose our function $g_n\in G^{\alpha,L_1}(\A^{n-2})$, depending only on the angle variables, of the form
\[ g_n=g_n^{(2)}\otimes \cdots \otimes g_n^{(n-1)}, \]
where
\[ g_n^{(2)}(\theta_2)=\eta_{p_{n+3}}(\tau_{M_n}(\theta_2))\varphi_{\alpha,(4\sigma)^{-\frac{1}{\alpha}}L_1}((4\sigma)^{-1}\theta_2), \]
and
\[ g_n^{(i)}(\theta_i)=\eta_{p_{n+1+i}}(\theta_i), \quad 3\leq i \leq n-1.  \]
Let us write
\[ c_1=\left|\varphi_{\alpha,(4\sigma)^{-\frac{1}{\alpha}}L_1}\right|_{\alpha,(4\sigma)^{-\frac{1}{\alpha}}L_1}. \]
Now we choose our point $a_n=(a_n^{(2)},\dots,a_n^{(n-1)})\in\A^{n-2}$. We set
\[ a_n^{(2)}=b_n^{M_n}=(0,N_{n}^{-1}I_{M_n}),  \]
and
\[ a_n^{(i)}=(0,p_{n+1+i}^{-1}), \quad 3\leq i \leq n-1. \]
 
Let us prove that $a_n$ is $q_n$-periodic for $G_n$. We can write
\[ G_n=\Phi^{\frac{1}{2}I_2^2+N_{n}^{-2}V(\theta_2)}\times \Phi^{\frac{1}{2}(I_3^2+I_4^2+\cdots+I_{n-1}^2)}=\Phi^{P_n}\times \widehat{G}. \]
Since $p_{n+4}, \dots, p_{2n}$ are mutually prime, the point $(a_n^{(3)},\dots,a_n^{(n-1)})\in\A^{n-3}$ is periodic for $\widehat{G}$, with period 
\[N_n'=p_{n+4} \cdots p_{2n}.\]
By construction, the point $a_n^{(2)}=b_n^{M_n}\in\A$ is periodic for $\Phi^{P_n}$, with period $q_n=N_nM_n$, where
\[ N_n=p_{n+3}p_{n+4}\cdots p_{2n}.\]
This means that $a_n$ is periodic for the product map $G_n$, and the exact period is given by the least common multiple of $q_n$ and $N_n'$. Since $N_n'$ divides $q_n$, the period of $a_n$ is $q_n$.   

Now let us show that the synchronization conditions~(\ref{sync}) hold true, that is
\[ g_n(a_n)=1, \quad dg_n(a_n)=0, \quad g_n(G_n^k(a_n))=0, \quad dg_n(G_n^k(a_n))=0, \]
for $1\leq k\leq q_n-1$. Since $\varphi_{\alpha,L_1}(0)=1$, then
\[ g_n(a_n)=g_n^{(2)}(0)\cdots g_n^{(n-1)}(0)=1 \]
and as $\varphi_{\alpha,L_1}'(0)=0$, then 
\[ dg_n(a_n)=0. \]
To prove the other conditions, let us write $G_n^k(a_n)=(\theta_k,I_k)\in\A^{n-2}$, for $1\leq k\leq q_n-1$. 

If $\theta_k^{(2)}$ does not belong to $]-\sigma,\sigma[$, then $g_n^{(2)}$ and its first derivative vanish at $\theta_k^{(2)}$ because it is the case for $\varphi_{\alpha,(4\sigma)^{-\frac{1}{\alpha}}L}$, so 
\[ g_n(\theta_k)=dg_n(\theta_k)=0. \]
Otherwise, if $-\sigma<\theta_k^{(2)}<\sigma$, one can easily check that
\[ - \frac{N_n-1}{2}\leq k \leq \frac{N_n-1}{2} \]
and therefore
\[ \tau_{M_n}(\theta_k^{(2)})=\frac{k}{N_n},  \]
while
\[ \theta_k^{(i)}=\frac{k}{p_{n+i+1}}, \quad 3\leq i \leq n-1. \]
If $N_n'=p_{n+4} \cdots p_{2n}$ divides $k$, that is $k=k'N_n'$ for some $k'\in\Z$, then
\[ \tau_{M_n}(\theta_k^{(2)})=\frac{k}{N_n}=\frac{k'}{p_{n+3}} \]
and therefore, by Lemma~\ref{funct}, $\eta_{p_{n+3}}$ vanishes with its differential at $\theta_k^{(2)}$, and so does $g_n^{(2)}$. Otherwise, $N_n'$ does not divide $k$ and then, for $3\leq i \leq n-1$, at least one of the functions $\eta_{p_{n+1+i}}$ vanishes with its differential at $\theta_k^{(2)}$, and so does $g_n^{(i)}$. Hence in any case
\[ g_n(\theta_k)=dg_n(\theta_k)=0, \quad 1\leq k\leq q_n-1, \]
and the synchronization conditions~(\ref{sync}) are satisfied.

Now it remains to estimate the norm of the function $g_n$. First, using Lemma~\ref{lemmeGev2}, one finds 
\[ |g_n|_{\alpha,L_1} \leq \left|\varphi_{\alpha,(4\sigma)^{-\frac{1}{\alpha}}L_1}\right|_{\alpha,(4\sigma)^{-\frac{1}{\alpha}}L_1} |\eta_{p_{n+3}}\circ\tau_{M_n}|_{\alpha,L_1}|\eta_{p_{n+4}}|_{\alpha,L_1}|\eta_{p_{2n}}|_{\alpha,L_1},  \]
which by definition of $c_1$ gives
\[ |g_n|_{\alpha,L_1} \leq c_1|\eta_{p_{n+3}}\circ\tau_{M_n}|_{\alpha,L_1}|\eta_{p_{n+4}}|_{\alpha,L_1}|\eta_{p_{2n}}|_{\alpha,L_1}.  \]
Then, by definition of $\Lambda$ (Lemma~\ref{lemmelambda}) and using Lemma~\ref{lemmeGev3} (with $\Lambda_1=\Lambda^{\frac{1}{\alpha}}$), 
\[ |\eta_{p_{n+3}}\circ\tau_{M_n}|_{\alpha,L_1} \leq |\eta_{p_{n+3}}|_{\alpha,\Lambda^{\frac{1}{\alpha}}}, \]
so using Lemma~\ref{funct} and setting $c_2=2\alpha\sup\left\{\Lambda^{\frac{1}{\alpha}},L_1\right\}(2\pi)^{\frac{1}{\alpha}}$, this gives
\begin{eqnarray*}
|g_n|_{\alpha,L_1} & \leq & c_1 e^{2\alpha(\Lambda^{\frac{1}{\alpha}}+(n-3)L_1)(2\pi p_{2n})^{\frac{1}{\alpha}}} \\
& \leq & c_1 e^{2\alpha\sup\left\{\Lambda^{\frac{1}{\alpha}},L_1\right\}(n-2)(2\pi p_{2n})^{\frac{1}{\alpha}}} \\
& \leq & c_1 e^{c_2(n-2)p_{2n}^{\frac{1}{\alpha}}}.
\end{eqnarray*}
Finally, by definition of $M_n$ we obtain
\begin{equation*}
|g_n|_{\alpha,L_1}\leq M_nN_n^{-1},
\end{equation*}
and as $q_n=N_nM_n$, we end up with
\[ q_n^{-1}|g_n|_{\alpha,L_1}\leq N_n^{-2}. \]
This concludes the proof.
\end{proof}

\subsection{Proof of Theorem~\ref{thmnonpert}} \label{sectnonpert}

\paraga In the previous section, we were concerned with a perturbation of the integrable diffeomorphism $\Phi^{\tilde{h}}$, which can be written as $\Phi^u \circ \Phi^{\tilde{h}+v}$. So now we will briefly describe a suspension argument to go from this discrete case to a continuous case (we refer once again to \cite{MS02} for the details). 

Here we will make use of bump functions, however the process is still valid, though more difficult, in the analytic category, (see for example \cite{Dou88} or \cite{KP94}). The basic idea is to find a time-dependent Hamiltonian function on $\A^n$ such that the time-one map of its isotopy is $\Phi^u \circ \Phi^{\tilde{h}+v}$, or, equivalently, an autonomous Hamiltonian function on $\A^{n+1}$ such that its first return map to some $2n$-dimensional Poincaré section coincides with our map $\Phi^u \circ \Phi^{\tilde{h}+v}$. 

Given $\alpha>1$ and $L>1$, let us define the function
\[ \phi_{\alpha,L}=\left(\int_{\T}\varphi_{\alpha,L}\right)^{-1}\varphi_{\alpha,L}, \]
where $\varphi_{\alpha,L}$ is the bump function given by Lemma~\ref{lemmeGev1}. If $\phi_0(t)=\phi_{\alpha,L}\big(t-\frac{1}{4}\big)$ and $\phi_1(t)=\phi_{\alpha,L}\big(t-\frac{3}{4}\big)$, the time-dependent Hamiltonian
\[ H^*(\theta,I,t)=(\tilde{h}(I)+v(\theta)) \otimes \phi_0(t) + u(\theta) \otimes \phi_1(t) \]
clearly satisfies
\[ \Phi^{H^*}=\Phi^u \circ \Phi^{\tilde{h}+v}. \] 
But as $u$ and $v$ go to zero, $H^*$ converges to $\tilde{h} \otimes \phi_0$ rather than $\tilde{h}$. However, using classical generating functions, it is not difficult to modify the Hamiltonian in order to prove the following proposition (see Lemma 2.5 in \cite{MS02}).

\begin{proposition}[Marco-Sauzin]\label{sus}
Let $n\geq 1$, $R>1$, $\alpha>1$, $L_1>0$ and $L>0$ satisfying
\begin{equation}\label{LL1}
L_1^\alpha=L^\alpha(1+(L^\alpha+R+1/2)|\phi_{\alpha,L}|_{\alpha,L}). 
\end{equation}
If $u_n,v_n\in G^{\alpha,L_1}(\T^{n-1})$, there exists $f_n\in G^{\alpha,L}(\T^n\times \overline{B})$, independent of the variable $I_n$, such that if
\[ H_n(\theta,I)=\demi(I_1^2+\cdots+I_{n-1}^2)+I_n+f_n(\theta,I), \quad (\theta,I)\in\A^n, \]
for any energy $e\in\R$, the Poincaré map induced by the Hamiltonian flow of $H_n$ on the section $\{\theta_n=0\}\cap H_{n}^{-1}(e)$ coincides with the diffeomorphism 
\[ \Phi^{u_n}\circ\Phi^{\tilde{h}+v_n}.\] 
Moreover, one has
\begin{equation}\label{taillesus}
\sup\{|u_n|_{C^0},|v_n|_{C^0}\}\leq |f_n|_{\alpha,L} \leq c_3\sup\{|u_n|_{\alpha,L_1},|v_n|_{\alpha,L_1}\}, 
\end{equation}
where $c_3=2|\phi_{\alpha,L}|_{\alpha,L}$ depends only on $\alpha$ and $L$.
\end{proposition}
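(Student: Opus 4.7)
The plan is to combine a suspension argument with a carefully chosen canonical transformation. The identity $\Phi^{H^{*}}=\Phi^{u_n}\circ\Phi^{\tilde h+v_n}$ is immediate: since $\phi_0,\phi_1$ are normalised bumps with disjoint supports in $[0,1]$ and $\phi_0$ precedes $\phi_1$, an internal change of time variable inside each bump collapses the isotopy of $H^{*}$ over $[0,1]$ into $\Phi^{u_n}\circ\Phi^{\tilde h+v_n}$. Suspending $H^{*}$ via
\[ K(\theta,I)=I_n+H^{*}(\theta',I',\theta_n), \quad (\theta,I)\in\T^n\times\R^n, \]
with $\theta'=(\theta_1,\dots,\theta_{n-1})$, produces an autonomous Hamiltonian whose flow satisfies $\dot\theta_n\equiv 1$ and whose Poincar\'e return map on $\{\theta_n=0\}\cap K^{-1}(e)$ is exactly $\Phi^{H^{*}}$ in the $(\theta',I')$ variables. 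The difficulty at this stage is that $K$ is not a perturbation of $h=\tilde h(I')+I_n$: the factor $\phi_0(\theta_n)$ in front of $\tilde h(I')$ is of order one even as $u_n,v_n\to 0$.

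The key step is to absorb this spurious factor by a symplectic change of variables. Let $\Psi$ be the $1$-periodic primitive of $1-\phi_0$ normalised by $\Psi(0)=0$; it exists since $\int_0^1\phi_0=1$. Consider the type-II generating function
\[ S(\theta,J)=\theta\cdot J+\Psi(\theta_n)\,\tilde h(J_1,\dots,J_{n-1}), \]
defining a symplectomorphism $(\theta,I)\mapsto(\Theta,J)$ with $\Theta_n=\theta_n$, $\Theta_j=\theta_j+\Psi(\theta_n)J_j$, $I_j=J_j$ (for $j<n$) and $I_n=J_n+\Psi'(\theta_n)\tilde h(J')$. Substituting into $K$ and using the identity $\Psi'+\phi_0\equiv 1$ yields
\[ K=\tilde h(J')+J_n+f_n(\Theta,J'), \quad f_n(\Theta,J')=\bigl(v_n\phi_0+u_n\phi_1\bigr)\bigl(\Theta'-\Psi(\Theta_n)J',\Theta_n\bigr), \]
which is visibly independent of $J_n$. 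Since $\Theta_n=\theta_n$ and $\Psi(0)=0$, the transformation restricts to the identity in the $(\theta',I')$ directions on the section $\{\theta_n=0\}$, so the Poincar\'e return map of $H_n:=\tilde h(J')+J_n+f_n$ coincides with $\Phi^{u_n}\circ\Phi^{\tilde h+v_n}$, as required.

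It remains to verify the norm estimates in~(\ref{taillesus}). The $C^0$ lower bound follows by fixing $\Theta_n$ in the interior of the support of a single bump (where the other bump vanishes and $\phi_j\ge 1$) and using that $\Theta'\mapsto\Theta'-\Psi(\Theta_n)J'$ is surjective onto $\T^{n-1}$, which yields $|f_n|_{C^0}\ge\max\{|u_n|_{C^0},|v_n|_{C^0}\}$. The upper Gevrey bound reduces, term by term, to estimating the Gevrey norm of the compositions $u_n(\Theta'-\Psi(\Theta_n)J')$ and $v_n(\Theta'-\Psi(\Theta_n)J')$; this is the main technical obstacle of the proof. One applies the Gevrey composition estimate (Lemma~\ref{lemmeGev3}) to the substitution $(\Theta,J)\mapsto\Theta'-\Psi(\Theta_n)J'$, whose $G^{\alpha,L}$-norm over $\T^n\times\overline B$ is controlled by $(L^\alpha+R+1/2)|\phi_{\alpha,L}|_{\alpha,L}$ thanks to the bound on $|\Psi|_{\alpha,L}$ in terms of $|\phi_{\alpha,L}|_{\alpha,L}$ and the bound $|J'|<R$ on $\overline B$. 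This is exactly the factor appearing in~(\ref{LL1}), while the constant $c_3=2|\phi_{\alpha,L}|_{\alpha,L}$ bundles the two summands in $f_n$ with the multiplicative contribution of the $\phi_j$.
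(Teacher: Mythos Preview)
Your argument is correct and is precisely the approach the paper has in mind: the paper does not spell out a proof of this proposition but only indicates that one should use ``classical generating functions'' and refers to Lemma~2.5 in \cite{MS02}, which is exactly the construction you carry out (suspension of $H^*$ followed by the type-II generating function $S(\theta,J)=\theta\cdot J+\Psi(\theta_n)\tilde h(J')$ to kill the $\phi_0$ factor in front of $\tilde h$). Your identification of the relation~(\ref{LL1}) as the hypothesis needed to push the Gevrey composition estimate through, and of $c_3=2|\phi_{\alpha,L}|_{\alpha,L}$ as coming from the product estimate applied to the two summands $v_n(\cdot)\phi_0+u_n(\cdot)\phi_1$, matches the intended argument.
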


\paraga Now we can finally prove our theorem.

\begin{proof}[Proof of Theorem~\ref{thmnonpert}]
Let $R>1$, $\alpha>1$ and $L>0$, and choose $L_1$ satisfying the relation~(\ref{LL1}). The constants $c_1$ and $c_2$ of Proposition~\ref{pertu} depend only on $\alpha$ and $L_1$, hence they depend only on $R$, $\alpha$ and $L$. 

We can define $u_n, v_n\in G^{\alpha,L_1}(\T^{n-1})$ by
\[ u_n=q_{n}^{-1}U\otimes g_n,\quad v_n=N_{n}^{-2}V, \]
where $U(\theta_1)=-(2\pi)^{-1}\sin 2\pi\theta_1$, $V(\theta_2)=-\cos 2\pi\theta_2$ (so $v_n$ is formally defined on $\T$ but we identify it with a function on $\T^{n-1}$) and $g_n$ is the function given by Proposition~\ref{pertu}. Let us apply Proposition~\ref{sus}: there exists $f_n\in G^{\alpha,L}(\T^n\times \overline{B})$, independent of the variable $I_n$, such that if
\[ H_n(\theta,I)=\demi(I_1^2+\cdots+I_{n-1}^2)+I_n+f_n(\theta,I), \quad (\theta,I)\in\A^n, \]
for any energy $e\in\R$, the Poincaré map induced by the Hamiltonian flow of $H$ on the section $\{\theta_n=0\}\cap H^{-1}(e)$ coincides with the diffeomorphism 
\[ \Phi^{u_n}\circ\Phi^{\frac{1}{2}(I_1^2+\cdots+I_{n-1}^2)+v_n}=\Phi^{u_n}\circ\Phi^{\tilde{h}+v_n}.\] 
Let us show that our system $H_n$ has a drifting orbit. First consider its Poincaré section defined by
\[ \Psi_n=\Phi^{u_n}\circ\Phi^{\frac{1}{2}(I_1^2+\cdots+I_{n-1}^2)+v_n}=\Phi^{f_n \otimes g_n} \circ (F \times G_n),\]
with 
\[ f_n=q_n^{-1}U, \quad F=\Phi^{\demi I_1^2}, \quad G_n=\Phi^{\frac{1}{2}(I_2^2+I_3^2+\cdots+I_{n-1}^2)+N_{n}^{-2}V(\theta_2)}. \]
By Proposition~\ref{pertu}, we can apply the coupling lemma (Lemma~\ref{coupling}), so 
\[ \Psi_n^{q_n}((0,0),a_n)=(\Phi^f_n \circ F^{q_n}(0,0),a_n).  \]
Then, using~(\ref{driftstand2}), observe that
\[ \Phi^{f_n} \circ F^{q_n}=\Phi^{q_n^{-1}U} \circ \left(\Phi^{\frac{1}{2}I_1^2}\right)^{q_n} =\psi_{q_n}, \]
so 
\begin{eqnarray*}
\Psi_n^{q_n^2}((0,0),a_n) & = & ((\Phi^{f_n} \circ F^{q_n})^{q_n}(0,0),a_n) \\
& = & (\psi_{q_n}^{q_n}(0,0),a_n) \\
& = & ((0,1),a_n),
\end{eqnarray*}
where the last equality follows from~(\ref{driftstand}). Hence, after $q_n^2$ iterations, the $I_1$-component of the point $x_n=((0,0),a_n)\in\A^{n-1}$ drifts from $0$ to $1$. Then, for the continuous system, the initial condition $(x_n,t=0,I_n=0)$ in $\A^n$ gives rise to a solution $(x(t),t,I_n(t))=(x(t),\theta_n(t),I_n(t))$ of the Hamiltonian vector field generated by $H_n$ such that
\[ x(k)=\Psi_n^k(x_n), \quad k\in\Z. \] 
So after a time $\tau_n=q_n^2$, the point $(x_n,(0,0))$ drifts from $0$ to $1$ in the $I_1$-direction, and this gives our drifting orbit.

Now let $\varepsilon_n=|f_n|_{\alpha,L_1}$ be the size of our perturbation. Using the estimate~(\ref{estimgn}) and~(\ref{taillesus}) one finds
\begin{equation}\label{estNeps}
N_{n}^{-2}\leq \varepsilon_n \leq c_3 N_{n}^{-2}. 
\end{equation}
By the prime number theorem, $p_n$ is equivalent to $n\ln n$, so there exists $n_0\in\N^*$ such that for $n\geq n_0$, one can ensure that
\[ p_{2n}/4 \leq p_{n+i} \leq p_{2n}, \quad 3\leq i\leq n, \]
which gives
\begin{equation}\label{estimPN}
(p_{2n}/4)^{n-2} \leq N_n \leq p_{2n}^{n-2}, \quad N_n^{\frac{1}{n-2}}\leq p_{2n} \leq 4N_n^{\frac{1}{n-2}}. 
\end{equation}
We can also assume by the prime number theorem that for $n\geq n_0$, one has
\begin{equation}\label{nompremier}
2n\ln 2n \leq p_{2n} \leq 2(2n \ln 2n)=4n\ln 2n. 
\end{equation}
From the above estimates~(\ref{estimPN}) and~(\ref{nompremier}) one easily obtains
\begin{equation}\label{estN}
e^{(n-2)\ln (2^{-1}n\ln 2n)}\leq N_n\leq e^{(n-2)\ln (4n\ln 2n)},
\end{equation}
and, together with~(\ref{estNeps}), one finds
\begin{equation}\label{estpert}
e^{-2(n-2)\ln (4n\ln 2n)}\leq \varepsilon_n \leq c_3e^{-2(n-2)\ln (2^{-1}n\ln 2n)}.
\end{equation}

Finally it remains to estimate the time $\tau_n$. First recall that 
\[ M_n=2\left[c_1N_n e^{c_2(n-2)p_{2n}^{\frac{1}{\alpha}}}\right], \]
and with~(\ref{estimPN}) 
\[ q_n=N_nM_n  \leq  3c_1N_n^2 e^{4c_2(n-2)N_n^{\frac{1}{\alpha(n-2)}}}. \]
Hence
\[ q_n^2\leq  9c_1^2N_n^4 e^{8c_2(n-2)N_n^{\frac{1}{\alpha(n-2)}}}. \]
Then using~(\ref{estN}) we have
\[ N_n^{\frac{1}{\alpha(n-2)}}\leq (4n \ln 2n)^{\frac{1}{\alpha}} \] 
and from~(\ref{estNeps}) we know that
\[ N_n^4 \leq \left(\frac{c_3}{\varepsilon_n}\right)^2,  \]
so we obtain
\[ q_n^2\leq 9c_1^2\left(\frac{c_3}{\varepsilon_n}\right)^2 e^{8c_2(n-2)(4n \ln 2n)^{\frac{1}{\alpha}}}. \]
Now taking $n_0$ larger if necessary, as $\alpha>1$, one can ensure that for $n\geq n_0$,
\[ (4n)^{\frac{1}{\alpha}} \leq n, \quad (\ln 2n)^{\frac{1}{\alpha}}\leq \ln (2^{-1}n\ln 2n), \]
so 
\[ 8c_2(n-2)(4n \ln 2n)^{\frac{1}{\alpha}} \leq 8c_2(n-2)n\ln (2^{-1}n\ln 2n).  \]
Therefore
\begin{eqnarray*}
q_n^2 & \leq & 9c_1^2\left(\frac{c_3}{\varepsilon_n}\right)^2 e^{8c_2n(n-2)\ln (2^{-1}n\ln 2n)} \\
& \leq & 9c_1^2\left(\frac{c_3}{\varepsilon_n}\right)^2\left(e^{2(n-2)\ln (2^{-1}n\ln 2n))}\right)^{4c_2n}. 
\end{eqnarray*}
Finally by~(\ref{estpert}) we obtain
\begin{eqnarray*}
q_n^2 & \leq & 9c_1^2\left(\frac{c_3}{\varepsilon_n}\right)^2\left(\frac{c_3}{\varepsilon_n}\right)^{4c_2n} \\
& \leq & C\left(\frac{c}{\varepsilon_n}\right)^{n\gamma}
\end{eqnarray*}
with $C=9c_1^2$, $c=c_3$ and $\gamma=2+4c_2$. This ends the proof.
\end{proof}

\section{Proof of Theorem~\ref{thmnonpertana}}

The proof of Theorem~\ref{thmnonpertana} will be presented in section~\ref{sectnonpertana}, but first in section~\ref{mechanismana}, following \cite{LM05}, we will explain how the mechanism of instability that we explained in the Gevrey context can be (partly) generalized to an analytic context.

Recall that the first feature of the mechanism is to study perturbations of integrable maps and to obtain a result for perturbations of integrable flows by a ``quantitative" suspension argument. In the Gevrey case, this was particularly easy using compactly-supported functions. In the analytic case, this is more difficult but such a result exists, and here we will use a version due to Kuskin and Pöschel (\cite{KP94}). 

The second and main feature of the mechanism is the use of a coupling lemma, which enables us to embed a low-dimensional map having unstable orbits into a multi-dimensional near-integrable map. In the Gevrey case, we simply used the family of maps $\psi_q : \A \rightarrow \A$ defined as in~(\ref{maps1}) and the difficult part was the choice of the coupling, where we made an important use of the existence of compactly-supported functions. We do not know if this approach can be easily extended to the analytic case. However, by a result of Lochak and Marco (\cite{LM05}), one can still follow this path by using instead a suitable family of maps $\mathcal{F}_q : \A^2 \rightarrow \A^2$ having a well-controlled unstable orbit.   

\subsection{The modified mechanism}\label{mechanismana}

\paraga So let us describe this family of maps $\mathcal{F}_q : \A^2 \rightarrow \A^2$, $q\in\N^*$. We fix a width of analyticity $\sigma>0$ (to be chosen small enough in Proposition~\ref{LocMar} below). For $q$ large enough, $\mathcal{F}_q$ will appear as a perturbation of the following \textit{a priori} unstable map
\[ \mathcal{F}_*=\Phi^{\demi (I_1^2+I_2^2)+\cos 2\pi\theta_1} : \A^2 \rightarrow \A^2.  \]
More precisely, for $q\in\N^*$, let us define an analytic function $f_q : \A^2 \rightarrow \R$, depending only on the angle variables, by
\[ f_q(\theta_1,\theta_2)=f_{q}^{(1)}(\theta_1)f^{(2)}(\theta_2), \]
where
\[ f_{q}^{(1)}(\theta_1)=(\sin\pi\theta_1)^{\nu(q,\sigma)}, \quad f^{(2)}(\theta_2)=-\pi^{-1}(2+\sin 2\pi(\theta_2+6^{-1})). \]
We still have to define the exponent $\nu(q,\sigma)$ in the above expression for $f_{q}^{(1)}$. Let us denote by $[\,.\,]$ the integer part of a real number. Given $\sigma>0$, let $q_\sigma$ be the smallest positive integer such that 
\[ \left[\frac{\ln q_\sigma}{4\pi\sigma}+1\right]=1, \]
then we set
\[ \nu(q,\sigma)=2\left[\frac{\ln q_\sigma}{4\pi\sigma}+1\right], \quad q\geq q_\sigma. \]
In particular, for $q\geq q_\sigma$, $\nu(q,\sigma) \geq 2$ and it is always an even integer, hence $f^{(q)}$ is a well-defined $1$-periodic function. The reasons for the choice of this function $f^{(q)}$ are explained at length in \cite{Mar05} and \cite{LM05}, so we refer to these papers for some motivations.

Finally, for $q\geq q_\sigma$, we can define
\begin{equation}\label{Fq}
\mathcal{F}_q=\Phi^{q^{-1}f_q}\circ\mathcal{F}_* : \A^2 \rightarrow \A^2. 
\end{equation}
Let us also define the family of points $(\xi_{q,k})_{k\in\Z}$ of $\A^2$ by their coordinates
\[ \xi_{q,k} : (\theta_1=1/2, I_1=2, \theta_2=0, I_2=q^{-1}(k+1)). \]
Clearly, the $I_2$-component of the point $\xi_{q,k}$ converges to $\pm \infty$ when $k$ goes to $\pm \infty$, hence the sequence $(\xi_{q,k})_{k\in\Z}$ is wandering in $\A^2$.

\paraga The following result was proved in \cite{LM05}, Proposition 2.1.

\begin{proposition}[Lochak-Marco]\label{LocMar}
There exist a width $\sigma>0$, an integer $q_0$ and a constant $0<d<1$ such that for any $q\geq q_0$, the diffeomorphism $\mathcal{F}_q : \A^2 \rightarrow \A^2$ has a point $\zeta_q \in \A^2$ which satisfies
\begin{equation*}
|\mathcal{F}_{q}^{kq}(\zeta_q)-\xi_{q,k}|\leq d^{\nu(q,\sigma)}.
\end{equation*}
\end{proposition}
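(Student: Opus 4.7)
The plan is to realise $\zeta_q$ as a genuine orbit of $\mathcal{F}_q$ whose $kq$-th iterates shadow the reference sequence $\xi_{q,k}=(1/2,2,0,q^{-1}(k+1))$. Geometrically, the point $(\theta_1,I_1)=(1/2,2)$ sits on the upper separatrix $\{I_1=2\sin\pi\theta_1\}$ of the pendulum Hamiltonian $\frac{1}{2}I_1^2+\cos 2\pi\theta_1$, which issues from the hyperbolic fixed point $(0,0)$ (eigenvalues $e^{\pm 2\pi}$ of $\mathcal{F}_*$). The shadowing orbit must lie just outside the separatrix so that its $(\theta_1,I_1)$-projection completes one loop of the pendulum in exactly $q$ iterations, crossing the window around $(1/2,2)$ once per loop and asymptotically grazing $(0,0)$. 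The energy offset needed to achieve period $q$ is $\sim e^{-\pi q}$, comfortably smaller than the target error $d^{\nu(q,\sigma)}\sim q^{\ln d/(2\pi\sigma)}$, so the geometric picture is self-consistent.

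The first step is to check that the prescribed drift in $I_2$ is actually what $\mathcal{F}_q^q$ produces on the reference. At $(\theta_1,\theta_2)=(1/2,0)$ one has $\partial_{\theta_1}f_q=0$ (because $\cos(\pi/2)=0$) and $\partial_{\theta_2}f_q=f_q^{(1)}(1/2)(f^{(2)})'(0)=-2\cos(\pi/3)=-1$, so one application of $\Phi^{q^{-1}f_q}$ at that point leaves $I_1$ invariant and raises $I_2$ by exactly $q^{-1}$. The key analytic input is that $(\sin\pi\theta_1)^{\nu(q,\sigma)}\leq d^{\nu(q,\sigma)}$ outside any fixed neighbourhood of $\theta_1=1/2$, for some $d\in(0,1)$ independent of $q$. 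Hence during one loop all significant kicks on $I_2$ are concentrated at the single passage through $\theta_1\approx 1/2$; the long excursion of the orbit near $(0,0)$ contributes only $O(d^{\nu(q,\sigma)})$ corrections, and the net effect of $\mathcal{F}_q^q$ on the reference is $\xi_{q,k}\mapsto \xi_{q,k+1}+O(d^{\nu(q,\sigma)})$.

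The second step is a graph-transform/Newton argument on bi-infinite sequences $(y_k)_{k\in\Z}\in\ell^{\infty}(\Z,\A^2)$. The linearisation of $\mathcal{F}_q^q$ along the reference splits as a hyperbolic $(\theta_1,I_1)$-part inherited from the dichotomy at $(0,0)$, plus a centre $(\theta_2,I_2)$-part treated by passing to a co-moving frame that absorbs the $q^{-1}$ drift. In this splitting, the shadowing equation $y_{k+1}=\mathcal{F}_q^q(y_k)$ becomes a contraction on a ball of radius $Cd^{\nu(q,\sigma)}$ around $(\xi_{q,k})_k$, and its unique fixed point gives $\zeta_q=y_0$. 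The main obstacle is quantitative: one must control the hyperbolic dichotomy \emph{throughout} the loop, not just at $(0,0)$, and ensure that the exponentially small perturbation beats the amplification of errors over the long passage near the fixed point. This is exactly the balance enforced by the scaling $\nu(q,\sigma)\sim \ln q/(2\pi\sigma)$: $d^{\nu(q,\sigma)}$ becomes a polynomial in $q^{-1}$ whose exponent can be made as large as desired by taking $\sigma$ small, uniformly in $k\in\Z$, which is precisely the ``$\sigma$ sufficiently small'' hypothesis of the proposition.
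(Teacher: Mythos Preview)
The paper does not prove this proposition; it quotes it from \cite{LM05} and explicitly says the proof ``is rather difficult and it would be too long to explain it''. The only information the paper gives about the argument is that the two key ingredients are (i) a Sternberg-type analytic symplectic conjugacy near the normally hyperbolic invariant manifold, and (ii) the topological method of correctly aligned windows due to Easton. Your sketch is therefore not a variant of the paper's proof but an attempt to replace those tools by a direct Newton/shadowing argument on bi-infinite sequences.

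Your geometric set-up is accurate: $(1/2,2)$ lies on the upper separatrix, the kick at $(\theta_1,\theta_2)=(1/2,0)$ produces exactly $\Delta I_2=q^{-1}$, the localisation of $(\sin\pi\theta_1)^{\nu(q,\sigma)}$ away from $\theta_1=1/2$ gives the $d^{\nu}$ smallness, and the scaling $\nu\sim (2\pi\sigma)^{-1}\ln q$ turns $d^{\nu}$ into an arbitrarily high power of $q^{-1}$. All of that is right and is the reason the construction works.

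The gap is in your second step. The $(\theta_2,I_2)$ block is genuinely \emph{parabolic}: the linearisation of $\mathcal{F}_q^q$ along the reference in that block is, to leading order, $\begin{pmatrix}1 & q\\ 0 & 1\end{pmatrix}$, and passing to a co-moving frame removes the drift $q^{-1}$ in $I_2$ but not the off-diagonal $q$. On $\ell^{\infty}(\Z)$ the associated transfer operator has spectrum touching the unit circle, so the linearised shadowing equation is \emph{not} boundedly invertible and no contraction is available; an $O(\delta)$ error in $I_2$ at index $k$ produces an $O(kq\delta)$ error in $\theta_2$ downstream. This is precisely why the Lochak--Marco proof does not use a Newton scheme: the correctly aligned windows method is a degree-theoretic device that survives centre directions, and the analytic Sternberg normal form is what makes the window geometry controllable over the long passage near the hyperbolic fixed point. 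Your final paragraph conflates two separate difficulties --- smallness of the perturbation (handled by $\sigma$ small) and lack of hyperbolicity in the $(\theta_2,I_2)$ block (not handled by $\sigma$ small) --- and the latter is the one that actually forces the heavier machinery.
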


As a consequence, the orbit of the point $\zeta_q \in \A^2$ under the map $\mathcal{F}_{q}$ is also wandering in $\A^2$. In particular, for $k=0$ and $k=3q$ the above estimate yields
\[ |\zeta_q-\xi_{q,0}|\leq d^{\nu(q,\sigma)}, \quad |\mathcal{F}_{q}^{3q^2}(\zeta_q)-\xi_{q,3q}|\leq d^{\nu(q,\sigma)}, \] 
and as
\[ |\xi_{q,3q}-\xi_{q,0}|=3, \]
one obtains
\begin{equation}\label{timeana}
|\mathcal{F}_{q}^{3q^2}(\zeta_q)-\zeta_q| \geq 3-2d^{\nu(q,\sigma)} \geq 1.
\end{equation}

The proof of the above proposition is rather difficult and it would be too long to explain it. We just mention that crucial ingredients are on the one hand a conjugacy result for normally hyperbolic manifolds (in the spirit of Sternberg) adapted to this analytic and symplectic context, and on the other hand the classical method of correctly aligned windows introduced by Easton. 

\paraga Now this family of maps $\mathcal{F}_q : \A^2 \rightarrow \A^2$ will be used in the coupling lemma. More precisely, recalling the notations of the coupling lemma~\ref{coupling}, in the following we shall take $m=2$, 
\[ F=F_n=\Phi^{\demi (I_1^2+I_2^2)+N_n^{-2}\cos 2\pi\theta_1},\] 
which is just a rescaled version of the map $\mathcal{F}_*$ we introduced before, and $f=f_n=q_{n}^{-1}f_{q_n}$, for some positive integer parameters $N_n$ and $q_n$ to be defined below.

It remains to choose the dynamics on the second factor, and here it will be an easy task. In order to have a result for a continuous system with $n$ degrees of freedom, we set $m'=n-3$, and it will be just fine to take
\[ G=G_n=\Phi^{\demi(I_3^2+\cdots+I_{n-1}^{2})}. \]
If $(p_j)_{j\geq 0}$ is the ordered sequence of prime numbers, now we let $N_n$ be the product of the $n-3$ prime numbers $\{p_{n+4},\dots,p_{2n}\}$, that is
\begin{equation}\label{Nn}
N_n=p_{n+4}p_{n+5}\dots p_{2n}. 
\end{equation}
The next proposition is the analytic analogue of Proposition~\ref{pertu}, and its proof is even simpler.

\begin{proposition}\label{pertuana}
Let $n\geq 4$ and $\sigma >0$. Then there exist a function $g_n\in \mathcal{A}_{\sigma}(\T^{n-3})$ and a point $a_n\in\A^{n-3}$ such $a_n$ is $N_n$-periodic for $G_n$ and $(g_n, G_n, a_n, N_n)$ satisfy the synchronization conditions (\ref{sync}): 
\begin{equation*}
g_n(a_n)=1, \quad dg_n(a_n)=0, \quad g_n(G_n^k(a_n))=0, \quad dg_n(G_n^k(a_n))=0, 
\end{equation*} 
for $1 \leq k \leq N_n-1$. Moreover, there exists a positive constant $c$ depending only on $\sigma$ such that if
\begin{equation}\label{qn}
q_n=2N_n^4[e^{c(n-3)p_{2n}}],
\end{equation}
the estimate
\begin{equation}\label{estimgnana} 
q_n^{-1/2}|g_n|_{\sigma}\leq N_{n}^{-2}, 
\end{equation}
holds true.
\end{proposition}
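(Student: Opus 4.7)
The plan is to follow the pattern of the proof of Proposition~\ref{pertu}, which becomes substantially simpler in this analytic setting. The key structural point is that the pendulum has now been moved out of the second factor---it lives in $F_n$ through the map $\mathcal{F}_*$---so $G_n$ is merely a product of integrable twist maps $\Phi^{\demi I_j^2}$, $j=3,\dots,n-1$. Consequently, the ``naive'' choice of coupling function, namely a tensor product of the $\eta_p$ functions of Lemma~\ref{funct}, which failed in the Gevrey set-up due to the equidistribution argument recalled just before Proposition~\ref{pertu}, now succeeds. The size estimate~(\ref{estimgnana}) involves $q_n^{1/2}$ (not $q_n^{-1}$ as in the Gevrey case), and the definition~(\ref{qn}) of $q_n$ is tailored precisely to absorb the exponential growth of such a tensor product in the analytic norm.

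First I would construct the periodic point. For $3\leq j\leq n-1$, set $a_n^{(j)}=(0,p_{n+1+j}^{-1})\in\A$; a direct computation gives $(\Phi^{\demi I_j^2})^k(a_n^{(j)})=(k/p_{n+1+j},p_{n+1+j}^{-1})$, so $a_n^{(j)}$ is $p_{n+1+j}$-periodic for $\Phi^{\demi I_j^2}$. Since the primes $p_{n+4},\dots,p_{2n}$ are mutually coprime, the product $a_n=(a_n^{(3)},\dots,a_n^{(n-1)})$ is periodic for $G_n$ with period exactly $N_n$.

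Next I would define $g_n\in\mathcal{A}_\sigma(\T^{n-3})$, depending only on the angles, by $g_n=g_n^{(3)}\otimes\cdots\otimes g_n^{(n-1)}$ with $g_n^{(j)}(\theta_j)=\eta_{p_{n+1+j}}(\theta_j)$, and verify the synchronization conditions combinatorially. At $k=0$ each factor equals $1$ with vanishing derivative (Lemma~\ref{funct}), so $g_n(a_n)=1$ and $dg_n(a_n)=0$. For $1\leq k\leq N_n-1$, the $\theta_j$-coordinate of $G_n^k(a_n)$ equals $k/p_{n+1+j}\bmod 1$; if no $\eta_{p_{n+1+j}}$ vanished at this coordinate we would need $p_{n+1+j}\mid k$ for every $j$, hence $N_n\mid k$, a contradiction. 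Thus some factor $g_n^{(j_0)}$ vanishes to first order at $G_n^k(a_n)$ and, by the product rule, so do $g_n$ and $dg_n$.

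For the size estimate, on the complex strip $|\mathcal{I}(\theta)|<\sigma$ the classical bound $|\cos 2\pi l\theta|\leq e^{2\pi l\sigma}$ gives $|\eta_p|_\sigma\leq e^{4\pi\sigma p}$ directly. Because the factors of $g_n$ depend on disjoint variables, the analytic sup-norm of the tensor product is the product of the norms, and hence
\[ |g_n|_\sigma\leq\prod_{j=3}^{n-1}e^{4\pi\sigma p_{n+1+j}}\leq e^{4\pi\sigma(n-3)p_{2n}}.\]
Setting $c=8\pi\sigma$ in~(\ref{qn}), one has $q_n\geq N_n^4\, e^{c(n-3)p_{2n}}$ for $n$ large enough (so that the floor is negligible), whence
\[ q_n^{1/2}\geq N_n^2\,e^{4\pi\sigma(n-3)p_{2n}}\geq N_n^2|g_n|_\sigma,\]
which rearranges to $q_n^{-1/2}|g_n|_\sigma\leq N_n^{-2}$. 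There is no real obstacle here: the genuine analytic work is packaged into Proposition~\ref{LocMar}, and once the pendulum has been extracted from $G_n$, the coupling construction is essentially automatic.
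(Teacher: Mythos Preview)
Your proof is correct and follows essentially the same route as the paper: the same choice of $a_n^{(j)}=(0,p_{n+1+j}^{-1})$, the same tensor product $g_n=\bigotimes_j \eta_{p_{n+1+j}}$, the same combinatorial verification of the synchronization conditions, the bound $|\eta_p|_\sigma\leq e^{4\pi\sigma p}$, and the choice $c=8\pi\sigma$. The only minor slip is the phrase ``for $n$ large enough (so that the floor is negligible)'': in fact the factor $2$ in the definition of $q_n$ is there precisely to absorb the floor, since $2[x]\geq x$ for all $x\geq 1$, so the inequality $q_n\geq N_n^4 e^{c(n-3)p_{2n}}$ holds for every $n\geq 4$ without further restriction.
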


The function $g_n$ belongs to $\mathcal{A}_{\sigma}(\T^{n-3})$, but it can also be considered as a function in $\mathcal{A}_{\sigma}(\A^{n-3})$ depending only on the angle variables.

As in the previous section, one can easily see that the coupling lemma, together with both Proposition~\ref{LocMar} and Proposition~\ref{pertuana}, already give us a result of instability for a perturbation of an integrable map, but we shall not state it. 

\begin{proof}
Recall that for $p \in \N^*$, we have defined in Lemma~\ref{funct} an analytic function $\eta_p : \T \rightarrow \R$ by
\[ \eta_p(\theta)=\left(\frac{1}{p}\sum_{l=0}^{p-1}\cos 2\pi l\theta \right)^2. \]
We choose our function $g_n\in \mathcal{A}_{\sigma}(\T^{n-3})$ of the form
\[ g_n=g_n^{(3)}\otimes \cdots \otimes g_n^{(n-1)}, \]
where
\[ g_n^{(i)}(\theta_i)=\eta_{p_{n+1+i}}(\theta_i), \quad 3\leq i \leq n-1,  \]
and our point $a_n=(a_n^{(3)},\dots,a_n^{(n-1)})\in\A^{n-3}$ where
\[ a_n^{(i)}=(0,p_{n+1+i}^{-1}), \quad 3\leq i \leq n-1. \]
Recalling the definition of $G_n$ and $N_n$, it is obvious that $a_n$ is $N_n$-periodic for $G_n$. Moreover, by Lemma~\ref{funct}, the function $\eta_p$ satisfies
\[ \eta_p(0)=1, \quad \eta_p'(0)=0, \quad \eta_p(k/p)=\eta_p'(k/p)=0, \]
for $1 \leq k \leq p-1$, from which one can easily deduce that $(g_n, G_n, a_n, N_n)$ satisfy the synchronization conditions (\ref{sync}).

Concerning the estimate, first note that
\[ |\eta_p|_{\sigma}\leq e^{4\pi\sigma p} \]
so that
\[ |g_n|_{\sigma} \leq |\eta_{p_{n+4}}|_{\sigma}\cdots |\eta_{p_{2n}}|_{\sigma} \leq e^{4\pi \sigma(n-3)p_{2n}}. \]
Therefore, if we set $c=8\pi\sigma$, then by definition of $q_n$ one has
\[ q_n^{1/2} \geq N_n^2 e^{4\pi \sigma(n-3)p_{2n}}\]
and this eventually gives us
\[ q_n^{-1/2}|g_n|_{\sigma}\leq N_{n}^{-2}, \]
which is the desired estimate.  
\end{proof}

\subsection{Proof of Theorem~\ref{thmnonpertana}}\label{sectnonpertana}

\paraga First we shall recall the following result of Kuksin-Pöschel (\cite{KP94}, see also \cite{Kuk93}). 

\begin{proposition}[Kuksin-Pöschel]\label{susana}
Let $\Psi_n : \A^{n-1} \rightarrow \A^{n-1}$ be a bounded real-analytic exact-symplectic diffeomorphism, which has a bounded holomorphic extension to some complex neighbourhood $V_{\varrho}$, for some width $\varrho>0$ independent of $n\in\N^*$. Assume also that $|\Psi_n-\Phi^{\tilde{h}}|_{\varrho}$ goes to zero when $n$ goes to infinity, where $\tilde{h}(I_1,\dots,I_{n-1})=\demi(I_1^2+\cdots+I_{n-1}^2)$.

Then there exist $n_0\in\N^*$, $\rho<\varrho$ such that for any $n\geq n_0$, there exists $f_n\in \mathcal{A}_\rho(\T^n \times B)$, independent of the variable $I_n$, such that if
\[ H_n(\theta,I)=\demi(I_1^2+\cdots+I_{n-1}^2)+I_n+f_n(\theta,I), \quad (\theta,I)\in\A^n, \]
for any energy $e\in\R$, the Poincaré map induced by the Hamiltonian flow of $H_n$ on the section $\{\theta_n=0\}\cap H_{n}^{-1}(e)$ coincides with $\Psi_n$. Moreover, the estimate
\begin{equation}\label{taillesusana}
|\Psi_n-\Phi^{\tilde{h}}|_{\varrho} \leq |f_n|_\rho \leq \delta_n |\Psi_n-\Phi^{\tilde{h}}|_{\varrho}, 
\end{equation}
holds true for some constant $\delta_n$ that may depends on $n\in\N^*$.
\end{proposition}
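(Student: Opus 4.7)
The plan is to reduce the statement to the construction of a small time-dependent Hamiltonian on $\A^{n-1}\times\T$ whose time-one flow is $\Psi_n$, and then to apply the classical autonomous suspension in one extra dimension. First I would write $\Psi_n = \Phi^{\tilde h}\circ \Xi_n$ where $\Xi_n = (\Phi^{\tilde h})^{-1}\circ \Psi_n$. Since $\Phi^{\tilde h}$ is a polynomial shear in the actions with bounds depending only on $R$ and $\varrho$, the hypothesis $|\Psi_n-\Phi^{\tilde h}|_\varrho \to 0$ makes $\Xi_n$ an exact-symplectic diffeomorphism close to the identity on a slightly shrunk complex strip, with a linear estimate of $|\Xi_n - \mathrm{id}|$ by $|\Psi_n - \Phi^{\tilde h}|_\varrho$.

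Next I would represent $\Xi_n$ as the time-one map of a non-autonomous Hamiltonian $K_n(\theta,I,t)$ of small analytic norm. This is done by the classical generating-function argument: write $\Xi_n$ implicitly via $S_n(\theta,I') = \langle\theta,I'\rangle + s_n(\theta,I')$, solve for $s_n$ by an analytic implicit function theorem on a complex neighbourhood, and construct from $s_n$ a Hamiltonian $K_n$ whose time-one flow is $\Xi_n$ with a linear bound $|K_n|_{\varrho'} \leq C|\Xi_n - \mathrm{id}|_\varrho$, at the cost of a fixed shrinking $\varrho' < \varrho$ handled by Cauchy estimates. To embed this perturbation into a Hamiltonian of the desired form $\tilde h + \widehat K_n$, rather than into the Gevrey-style disjoint-support interpolation $\tilde h\otimes \phi_0 + u\otimes\phi_1$ of Proposition~\ref{sus} (which is unavailable analytically), I would conjugate $K_n$ by the flow of $\tilde h$: the resulting $\widehat K_n$ is analytic on a still smaller complex strip with norm bounded linearly in $|K_n|_{\varrho'}$, up to an $n$-dependent constant coming from estimates on $\Phi^t_{\tilde h}$.

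The autonomous suspension is then immediate. Introducing conjugate variables $(\theta_n,I_n)\in\A$ and defining
\[ f_n(\theta,I) := \widehat K_n(\theta_1,\ldots,\theta_{n-1},I_1,\ldots,I_{n-1},\theta_n), \qquad H_n = h + f_n, \]
the function $f_n$ is independent of $I_n$ by construction and the equation $\dot\theta_n = \partial_{I_n}H_n = 1$ makes $\theta_n$ a time parametrization. Hence the Poincar\'e return map to $\{\theta_n=0\}\cap H_n^{-1}(e)$ equals the time-one map of the non-autonomous flow of $\tilde h + \widehat K_n$, which is $\Phi^{\tilde h}\circ \Xi_n = \Psi_n$. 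The upper bound in (\ref{taillesusana}) collects the losses above into a single constant $\delta_n$, while the lower bound $|\Psi_n - \Phi^{\tilde h}|_\varrho \leq |f_n|_\rho$ is an elementary Gronwall-type estimate on the Hamiltonian flow of $\tilde h + f_n$.

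The main obstacle, and the reason a dimension-dependent constant $\delta_n$ must appear, lies in the analytic bookkeeping of the middle step. Each generating-function inversion and each flow conjugation costs a Cauchy shrinking of the analyticity width, with multiplicative constants controlled by the sup-norm of $\tilde h = \demi(I_1^2+\cdots+I_{n-1}^2)$ over the complex strip; this sup-norm scales with $n$ through the number of action components, so the final constant cannot be taken independent of $n$. This is in sharp contrast to the Gevrey Proposition~\ref{sus}, where compactly supported time-bumps decouple the norm estimate from the dimension, and it is precisely this dimension dependence that propagates into the constant $c_n$ appearing in the statement of Theorem~\ref{thmnonpertana}.
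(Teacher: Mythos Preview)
The paper does not give its own proof of Proposition~\ref{susana}: it is stated as a quotation of the Kuksin--P\"oschel suspension theorem with a reference to \cite{KP94} and \cite{Kuk93}, followed only by the remark that the dimension-dependent constant $\delta_n$ is the price paid in the analytic category, while the width $\rho$ stays independent of $n$. So there is nothing in the paper to compare your argument to line by line; the relevant comparison is with the method of the cited references.

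Your outline is essentially the Kuksin--P\"oschel scheme: factor $\Psi_n=\Phi^{\tilde h}\circ\Xi_n$ with $\Xi_n$ close to the identity, interpolate $\Xi_n$ by a one-parameter family of exact-symplectic maps via a generating function $S_n^t(\theta,I')=\langle\theta,I'\rangle+t\,s_n(\theta,I')$, read off a time-dependent Hamiltonian $K_n$, pull it back by $\Phi_t^{\tilde h}$ to obtain a perturbation of $\tilde h$, and finally pass to the autonomous system by adding $(\theta_n,I_n)$. This matches what is done in \cite{KP94}, so the strategy is correct.

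Two points in your sketch deserve tightening. First, the sentence ``construct from $s_n$ a Hamiltonian $K_n$ whose time-one flow is $\Xi_n$'' hides the only genuinely delicate step: one must check that the implicitly defined family $\Xi_n^t$ is an analytic isotopy on a strip of width independent of $n$, and that $K_n(\cdot,t)=\partial_t S_n^t$ evaluated along the isotopy stays bounded on that strip. This is where the analytic implicit function theorem is invoked with a smallness condition on $s_n$, and it is the reason an $n_0$ is needed. Second, your explanation of why $\delta_n$ depends on $n$ is plausible but slightly off-target: the shear $\Phi^{\tilde h}$ acts componentwise as $(\theta_j,I_j)\mapsto(\theta_j+I_j,I_j)$, so its Lipschitz constant in the sup-norm does not grow with $n$; the dimension enters rather through the Cauchy estimates needed to pass from bounds on $\Xi_n-\mathrm{id}$ to bounds on its generating function and on the derivatives appearing in the Hamiltonian, where the number of variables does matter. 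This is consistent with the paper's comment that only $\delta_n$, not $\rho$, picks up an $n$-dependence.
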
 

This suspension result is slightly less accurate (since more difficult) than Proposition~\ref{sus}, as there is a constant $\delta_n$ depending on $n$. However, what really matters is that the resulting width of analyticity $\rho$ depends only on $\varrho$ and $R$, but not on $n$.  

\paraga Now we can finally prove the theorem.

\begin{proof}[Proof of Theorem~\ref{thmnonpertana}]
Let $n\geq 4$, $R>1$ and $\sigma>0$ given by the Proposition~\ref{LocMar}, and let $N_n$ and $q_n$ defined as in (\ref{Nn}) and (\ref{qn}) respectively. 

We will first construct a map $\Psi_n$ with a well-controlled wandering point. To this end, by Proposition~\ref{pertuana} we can apply the coupling lemma~\ref{coupling} with the following data:
\[ F_n=\Phi^{\demi (I_1^2+I_2^2)+N_n^{-2}\cos 2\pi\theta_1}, \quad f_n=q_{n}^{-1}f_{q_n}, \quad G_n=\Phi^{\demi(I_3^2+\cdots+I_{n-1}^{2})}, \]
and with the function $g_n$ and the point $a_n$ given by the aforementioned proposition. This gives us the following: if
\[ u_n=q_{n}^{-1} f_{q_n}\otimes g_n , \quad v_n=N_n^{-2}V  \]
where $V(\theta_1)=\cos 2\pi\theta_1$, then the $N_n$-iterates of the map
\[ \Psi_n=\Phi^{u_n} \circ \Phi^{\tilde{h}+v_n} : \A^{n-1} \rightarrow \A^{n-1}\]
satisfies the following relation:
\begin{equation}\label{coup}
\Psi_n^{N_n}(x,a_n)=(\Phi^{q_{n}^{-1}f_{q_n}}\circ F_n^{N_n}(x),a_n), \quad x\in \A^2. 
\end{equation}
Now let us look at the map
\[ \Phi^{q_{n}^{-1}f_{q_n}}\circ F_n^{N_n}=\Phi^{q_{n}^{-1}f_{q_n}}\circ\left(\Phi^{\demi (I_1^2+I_2^2)+N_n^{-2}\cos 2\pi\theta_1}\right)^{N_n}. \]
If $S_n(\theta_1,\theta_2,I_1,I_2)=(\theta_1,\theta_2,N_nI_1,N_nI_2)$ is the rescaling by $N_n$ in the action components, one sees that
\[ \Phi^{q_{n}^{-1}f_{q_n}}\circ F_n^{N_n}=S_n^{-1}\circ \mathcal{F}_{N_n^{-1}q_n}\circ S_n \]
where $\mathcal{F}_{N_n^{-1}q_n}$ is defined in~(\ref{Fq}). Now by Proposition~\ref{LocMar}, choosing $n$ large enough so that $N_n^{-1}q_n\geq q_0$, this map has a wandering point $\zeta_{N_n^{-1}q_n}\in\A^2$, which by~(\ref{timeana}) satisfies
\[ \left|\mathcal{F}_{N_n^{-1}q_n}^{3N_n^{-2}q_n^2} \left(\zeta_{N_n^{-1}q_n}\right)-\zeta_{N_n^{-1}q_n}\right| \geq 1. \]
Using the above conjugacy relation, one finds that the point
\[ \chi_n=S_n^{-1}(\zeta_{N_n^{-1}q_n})\in\A^2 \]
wanders under the iteration of $\Phi^{q_{n}^{-1}f_{q_n}}\circ F_n^{N_n}$, and that its drift is bigger than one after $N_n(3N_n^{-2}q_n^2)=3N_n^{-1}q_n^2$ iterations, that is
\[ \left|(\Phi^{q_{n}^{-1}f_{q_n}}\circ F_n^{N_n})^{3N_n^{-1}q_n^2}(\chi_n)-\chi_n\right|\geq 1. \]  
By the relation~(\ref{coup}) this gives a wandering point $x_n=(\chi_n,a_n)\in\A^{n-1}$ for the map $\Psi_n$, satisfying the estimate
\begin{equation}\label{estimpsi}
|\Psi_n^{3q_n^2}(x_n)-x_n|\geq 1. 
\end{equation}

Next let us estimate the distance between $\Psi_n$ and the integrable diffeomorphism $\Phi^{\tilde{h}}$. First note that since $u_n,v_n\in \mathcal{A}_\sigma(\T^{n-1})$, $\Psi_n$ extends holomorphically to a complex neighbourhood of size $\sigma$. Let us now estimate the norms of $u_n$ and $v_n$. Obviously, one has
\[ N_n^{-2}\leq|v_n|_\sigma\leq e^{2\pi\sigma}N_n^{-2}. \]
By definition of $f_q$ and the exponent $\nu(q,\sigma)$, one easily obtains
\[ |q_n^{-1}f_{q_n}|_\sigma \leq q_n^{-1/2}|f^{(2)}|_\sigma, \]
and hence
\[ |u_n|_\sigma \leq |q_n^{-1}f_{q_n}|_\sigma |g_n|_\sigma \leq  q_n^{-1/2} |g_n|_\sigma |f^{(2)}|_\sigma \leq N_n^{2} |f^{(2)}|_\sigma, \]
where the last inequality follows from the estimate~(\ref{estimgnana}). Then by using Cauchy estimates and general inequalities on time-one maps, we obtain
\begin{equation}\label{estimdis}
N_n^{-2} \leq |\Psi_n-\Phi^{\tilde{h}}|_{\varrho} \leq c_\sigma N_n^{-2}, 
\end{equation}
for $n$ large enough, and for some constants $c_\sigma$ and $\varrho>0$ depending only on $\sigma$ (for instance, one can choose $\varrho=6^{-1}\sigma$).

Now we can eventually apply Proposition~\ref{susana}: there exist $n_0\in\N^*$, $\rho<\varrho$ such that for any $n\geq n_0$, there exists $f_n\in \mathcal{A}_\rho(\T^n \times B)$, independent of the variable $I_n$, such that if
\[ H_n(\theta,I)=\demi(I_1^2+\cdots+I_{n-1}^2)+I_n+f_n(\theta,I), \quad (\theta,I)\in\A^n, \]
for any energy $e\in\R$, the Poincaré map induced by the Hamiltonian flow of $H_n$ on the section $\{\theta_n=0\}\cap H_{n}^{-1}(e)$ coincides with $\Psi_n$. Clearly, the wandering point $x_n$ for $\Psi_n$ gives us a wandering orbit $(x(t),t,I_n(t))=(x(t),\theta_n(t),I_n(t))$ for the Hamiltonian vector field generated by $H_n$, such that
\[ x(k)=\Psi_n^k(x_n), \quad k\in\Z. \]
In particular, after a time $\tau_n=3q_n^2$, by the above equality and the relation~(\ref{estimpsi}) this orbit drifts from $0$ to $1$.

Now it remains to estimate the size of the perturbation $\varepsilon_n=|f_n|_\rho$ and the time of drift $\tau_n$ in terms of the number of degrees of freedom $n$. First, by~(\ref{taillesusana}) and~(\ref{estimdis}),
\begin{equation}\label{estNepsana}
N_{n}^{-2}\leq \varepsilon_n \leq c_n N_{n}^{-2}, 
\end{equation}
with $c_n=c_\sigma \delta_n$. Then, by the prime number theorem, taking $n_0$ large enough, one can ensure that
\[ p_{2n}/4 \leq p_{n+i} \leq p_{2n}, \quad 4\leq i\leq n, \]
which gives
\begin{equation}\label{estimPNana}
(p_{2n}/4)^{n-3} \leq N_n \leq p_{2n}^{n-3}, \quad N_n^{\frac{1}{n-3}}\leq p_{2n} \leq 4N_n^{\frac{1}{n-3}}. 
\end{equation}
We can also assume by the prime number theorem that for $n\geq n_0$, one has
\begin{equation}\label{nompremierana}
2n\ln 2n \leq p_{2n} \leq 2(2n \ln 2n)=4n\ln 2n. 
\end{equation}
From the above estimates~(\ref{estimPNana}) and~(\ref{nompremierana}) one easily obtains
\begin{equation}\label{estNana}
e^{(n-3)\ln (2^{-1}n\ln 2n)}\leq N_n\leq e^{(n-3)\ln (4n\ln 2n)},
\end{equation}
and, together with~(\ref{estNepsana}), one finds
\begin{equation}\label{estpertana}
e^{-2(n-3)\ln (4n\ln 2n)}\leq \varepsilon_n \leq c_n e^{-2(n-3)\ln (2^{-1}n\ln 2n)}.
\end{equation} 
Concerning the time $\tau_n$, we have
\[ \tau_n=3q_n^2 \leq 12 N_n^{8} e^{2c(n-3)p_{2n}} \leq 12 N_n^{8} e^{8c(n-3)N_n^{\frac{1}{n-3}}} , \]
where the last inequality follows from~(\ref{estimPNana}). Then using~(\ref{estNana}) we have
\[ N_n^{\frac{1}{(n-3)}}\leq 4n \ln 2n \] 
and from~(\ref{estNepsana}) we know that
\[ N_n^8 \leq \left(\frac{c_n}{\varepsilon_n}\right)^4,  \]
so we obtain
\[ q_n^2\leq 12\left(\frac{c_3}{\varepsilon_n}\right)^4 e^{32c(n-3)n \ln 2n}. \]
Then one can ensure that for $n\geq n_0$,
\[ \ln 2n \leq \ln (2^{-1}n\ln 2n), \]
so 
\[ 32c(n-3)n \ln 2n \leq 32c(n-3)n \ln (2^{-1}n\ln 2n).  \]
Therefore
\begin{eqnarray*}
q_n^2 & \leq & 12\left(\frac{c_n}{\varepsilon_n}\right)^4 e^{32c(n-3)n \ln (2^{-1}n\ln 2n)} \\
& \leq & 12\left(\frac{c_n}{\varepsilon_n}\right)^4\left(e^{2(n-3)\ln (2^{-1}n\ln 2n))}\right)^{16 cn}. 
\end{eqnarray*}
Finally by~(\ref{estpertana}) we obtain
\begin{eqnarray*}
q_n^2 & \leq & 12\left(\frac{c_n}{\varepsilon_n}\right)^4\left(\frac{c_n}{\varepsilon_n}\right)^{16cn} \\
& \leq & C\left(\frac{c_n}{\varepsilon_n}\right)^{n\gamma}
\end{eqnarray*}
with $C=12$ and $\gamma=4+16c$. This concludes the proof.
\end{proof}

\appendix

\section{Gevrey functions}\label{Gev}

In this very short appendix, we recall some facts about Gevrey functions that we used in the text. We refer to~\cite{MS02}, Appendix A, for more details.

The most important property of $\alpha$-Gevrey functions is the existence, for $\alpha>1$, of bump functions.

\begin{lemma}\label{lemmeGev1}
Let $\alpha>1$ and $L>0$. There exists a non-negative $1$-periodic function $\varphi_{\alpha,L}\in G^{\alpha,L}\left([-\frac{1}{2},\frac{1}{2}]\right)$ whose support is included in $[-\frac{1}{4},\frac{1}{4}]$ and such that $\varphi_{\alpha,L}(0)=1$ and $\varphi_{\alpha,L}'(0)=0$.
\end{lemma}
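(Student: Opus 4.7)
The plan is to realize $\varphi_{\alpha,L}$ as a rescaled product of two translates of a flat Gevrey-$\alpha$ model function, with the scaling parameter used to absorb $L$. Concretely, start with
\[ g(t) = \exp(-t^{-1/(\alpha-1)}) \text{ for } t>0, \qquad g(t)=0 \text{ for } t\le 0. \]
A Fa\`a di Bruno expansion of $g=\exp\circ(-t^{-\beta})$ with $\beta=1/(\alpha-1)$, followed by the standard optimization $\sup_{t>0} t^{-(1+\beta)k}e^{-t^{-\beta}}$ (whose value after Stirling behaves like $C_\alpha^k (k!)^{(\beta+1)/\beta} = C_\alpha^k(k!)^\alpha$) yields the classical estimate
\[ |g^{(k)}(t)| \leq A_\alpha C_\alpha^k (k!)^\alpha, \qquad t\in\R, \]
for constants depending only on $\alpha$; in particular $g$ and all its derivatives vanish at $0$, so $g$ glues smoothly with the zero function on $(-\infty,0]$.

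Given $L>0$, introduce a scaling parameter $\mu>0$ (to be fixed later) and set
\[ \psi_\mu(x) = g\bigl(\mu(1/4-x)\bigr)\,g\bigl(\mu(1/4+x)\bigr), \qquad \varphi_{\alpha,L}(x)=\psi_\mu(x)/\psi_\mu(0). \]
The first factor vanishes for $x\ge 1/4$ and the second for $x\le -1/4$, so $\mathrm{supp}(\psi_\mu)\subset[-1/4,1/4]$ irrespective of $\mu$. By construction $\psi_\mu$ is non-negative, even, and strictly positive at $0$ (with $\psi_\mu(0)=g(\mu/4)^2>0$), so $\varphi_{\alpha,L}(0)=1$ and $\varphi_{\alpha,L}'(0)=0$ by symmetry; since the support lies strictly inside $[-1/2,1/2]$, extending by $0$ and then $1$-periodically gives a smooth $1$-periodic function on $\R$.

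It remains to check that $\varphi_{\alpha,L}\in G^{\alpha,L}$. Each factor $g(\mu(1/4\pm x))$ has derivatives bounded by $A_\alpha (\mu C_\alpha)^j (j!)^\alpha$ by the chain rule and the model estimate. Applying Leibniz to the product and using the elementary inequality $\binom{k}{j}(j!(k-j)!)^\alpha\leq(k!)^\alpha$ (which reduces to $j!(k-j)!\leq k!$ and is valid for $\alpha\ge 1$), one obtains
\[ |\psi_\mu^{(k)}(x)| \leq A_\alpha^2 (k+1)(\mu C_\alpha)^k (k!)^\alpha, \]
hence
\[ L^{k\alpha}(k!)^{-\alpha}|\psi_\mu^{(k)}| \leq A_\alpha^2 (k+1)(L^\alpha\mu C_\alpha)^k. \]
Choosing $\mu=\mu(\alpha,L)$ so small that $L^\alpha \mu C_\alpha \le 1/2$ makes the resulting series converge, and dividing through by the positive constant $\psi_\mu(0)$ yields $|\varphi_{\alpha,L}|_{\alpha,L}<\infty$ as required.

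The main obstacle is the model derivative bound $|g^{(k)}|\leq A_\alpha C_\alpha^k(k!)^\alpha$, which rests on a careful Fa\`a di Bruno expansion and a Stirling-type optimization over $t>0$; once this is granted, the rescaling trick to absorb an arbitrary $L$, the Leibniz bookkeeping for the product, the normalization, and the periodization are all routine.
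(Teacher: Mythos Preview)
The paper does not actually prove this lemma; it only states it in Appendix~\ref{Gev} and refers to \cite{MS02}, Appendix~A, for details. Your construction is the standard one and is correct: the model flat function $g(t)=\exp(-t^{-1/(\alpha-1)})$ is Gevrey-$\alpha$ by the Fa\`a di Bruno/Stirling argument you outline, the product of the two translates gives a non-negative even bump with support in $[-1/4,1/4]$, and the rescaling by $\mu$ is precisely the device that lets you meet any prescribed constant $L$. This is essentially the same argument carried out in \cite{MS02}, so there is nothing to compare.
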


The following estimate on the product of Gevrey functions follows easily from the Leibniz formula. 

\begin{lemma}\label{lemmeGev2}
Let $L>0$, and $f,g\in G^{\alpha,L}(\T^n \times \overline{B})$. Then
\[ |fg|_{\alpha,L}\leq |f|_{\alpha,L}|g|_{\alpha,L}. \]
\end{lemma}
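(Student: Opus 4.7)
The plan is a direct application of the Leibniz rule combined with the standard ``multinomial comparison'' $(k+j)! \geq k!\,j!$, which for $\alpha \geq 1$ is exactly what lets the $(l!)^{-\alpha}$ weight dominate the binomial coefficients appearing in the expansion of $\partial^l(fg)$.

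First, I would apply the Leibniz formula pointwise: for any multi-index $l \in \N^{2n}$,
\[ \partial^l(fg) = \sum_{k\leq l}\binom{l}{k}\, \partial^k f\cdot \partial^{l-k} g, \]
which yields the $C^0$-bound
\[ |\partial^l(fg)|_{C^0} \leq \sum_{k\leq l}\frac{l!}{k!(l-k)!}|\partial^k f|_{C^0}|\partial^{l-k}g|_{C^0}. \]
Plugging this into the definition of $|\,\cdot\,|_{\alpha,L}$ and reindexing via $j=l-k$ (so that $|l|=|k|+|j|$), the norm $|fg|_{\alpha,L}$ is dominated by a double sum over $k,j\in\N^{2n}$ whose general term is
\[ L^{|k|\alpha}L^{|j|\alpha}\,\frac{((k+j)!)^{1-\alpha}}{k!\,j!}\,|\partial^k f|_{C^0}|\partial^j g|_{C^0}. \]

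The main (and only) observation is then the inequality $(k+j)! \geq k!\,j!$, which holds componentwise since each binomial coefficient $\binom{k_i+j_i}{k_i}$ is at least $1$. Because $1-\alpha\leq 0$, this gives $((k+j)!)^{1-\alpha}\leq (k!\,j!)^{1-\alpha}$, so
\[ \frac{((k+j)!)^{1-\alpha}}{k!\,j!} \leq (k!)^{-\alpha}(j!)^{-\alpha}. \]
After this majorization, the double sum factorizes cleanly as a product of two independent sums in $k$ and $j$, each of which is, respectively, $|f|_{\alpha,L}$ and $|g|_{\alpha,L}$, giving the claim.

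There is really no obstacle here: the lemma is essentially the statement that the Gevrey norm is submultiplicative, and the only nontrivial point is the comparison $((k+j)!)^{1-\alpha}\leq (k!\,j!)^{1-\alpha}$ for $\alpha\geq 1$, which plays the role of absorbing the binomial weights. One only has to be careful that all sums are over multi-indices in $\N^{2n}$ (taking into account both angle and action derivatives) and that the $C^0$-norms are taken on the same compact set $\T^n\times\overline{B}$, so that the pointwise Leibniz estimate integrates into the global norm estimate without loss.
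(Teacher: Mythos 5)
Your proof is correct and follows exactly the route the paper indicates (the paper merely cites the Leibniz formula and defers to \cite{MS02}, Appendix A, where the argument is the same): expand $\partial^l(fg)$ by Leibniz, reindex $j=l-k$, and absorb the multinomial coefficient $\binom{k+j}{k}$ using $(k+j)!\geq k!\,j!$ together with $1-\alpha\leq 0$. The one point worth stressing, which you correctly flag, is that the comparison is done componentwise over the multi-index $\N^{2n}$; with that, the double sum factorizes and the bound is immediate.
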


Finally, estimates on the composition of Gevrey functions are much more difficult (see Proposition A.1 in \cite{MS02}), but here we shall only need the following statement.

\begin{lemma}\label{lemmeGev3}
Let $\alpha\geq 1$, $\Lambda_1>0, L_1>0$, and $I,J$ be compact intervals of $\R$. Let $f\in G^{\alpha,\Lambda_1}(I)$, $g\in G^{\alpha,L_1}(J)$ and assume $g(J)\subseteq I$. If
\[ |g|_{\alpha,L_1}\leq \Lambda_{1}^{\alpha},\] 
then $f\circ g \in G^{\alpha,L_1}(J)$ and
\[ |f\circ g|_{\alpha,L_1} \leq |f|_{\alpha,\Lambda_1}. \]
\end{lemma}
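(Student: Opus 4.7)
The plan is to apply Faà di Bruno's formula to the derivatives of $f\circ g$ and then carry out a combinatorial estimate on the resulting sum, reducing the lemma to a single inequality on partial-Bell-polynomial sums.

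I would begin by writing, for each $k\geq 1$,
\[(f\circ g)^{(k)}(x) = \sum_{M=1}^{k} f^{(M)}(g(x))\, B_{k,M}(g'(x),g''(x),\ldots),\]
where $B_{k,M}$ is the partial Bell polynomial
\[B_{k,M}(y_1,y_2,\ldots) = \sum_{\substack{j_1,j_2,\ldots \geq 0\\ \sum_i j_i=M,\ \sum_i ij_i=k}} \frac{k!}{\prod_i j_i!\,(i!)^{j_i}}\prod_i y_i^{j_i}.\]
Substituting this into $|f\circ g|_{\alpha,L_1}=\sum_k L_1^{k\alpha}(k!)^{-\alpha}|(f\circ g)^{(k)}|_{C^0(J)}$, taking absolute values termwise and swapping the order of summation, I would obtain an inequality of the form $|f\circ g|_{\alpha,L_1} \leq \sum_{M\geq 0}|f^{(M)}|_{C^0(I)}\, T_M$, where $T_M$ is an explicit combinatorial sum over multi-indices $\vec j$ with $\sum_i j_i=M$. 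Comparing with the definition $|f|_{\alpha,\Lambda_1}=\sum_M\Lambda_1^{M\alpha}(M!)^{-\alpha}|f^{(M)}|_{C^0(I)}$, it would then suffice to establish the single estimate
\[T_M \leq \frac{\Lambda_1^{M\alpha}}{(M!)^{\alpha}}\]
for every $M\geq 0$.

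The key input for this combinatorial step would be the multinomial inequality $k!\geq (\prod_i j_i!)(\prod_i (i!)^{j_i})$, which holds because the ratio $k!/(\prod j_i!\prod(i!)^{j_i})$ counts the integer-valued number of set-partitions of $\{1,\ldots,k\}$ with block-size profile $\vec j$. Raising this to the power $\alpha-1\geq 0$ and rearranging would express each term of $T_M$ in the natural normalized coefficients $c_i := L_1^{i\alpha}|g^{(i)}|_{C^0(J)}/(i!)^{\alpha}$, which by hypothesis satisfy $\sum_{i\geq 0}c_i=|g|_{\alpha,L_1}\leq \Lambda_1^\alpha$.

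The main obstacle is the final passage from this to the bound $T_M\leq\Lambda_1^{M\alpha}/(M!)^\alpha$. A naive application of the multinomial identity combined with the elementary inequality $\sum_j z_j^\alpha\leq(\sum_j z_j)^\alpha$ (valid for $z_j\geq 0$ and $\alpha\geq 1$) produces only a weaker bound of the form $T_M\leq(\sum_i c_i^{1/\alpha})^{M\alpha}/(M!)^\alpha$, and the hypothesis $\sum c_i\leq\Lambda_1^\alpha$ does not directly control $\sum c_i^{1/\alpha}$ when $\alpha>1$. The correct argument requires keeping the extra factor $|P(k;\vec j)|^{1-\alpha}\leq 1$, with $|P(k;\vec j)|$ the set-partition count mentioned above, and exploiting the damping of terms with $k\gg M$ to recover the sharp estimate. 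This careful analysis, in its multi-variable version, is carried out in Appendix~A of \cite{MS02}, to which I would ultimately refer for the combinatorial details.
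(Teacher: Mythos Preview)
Your proposal aligns with the paper's treatment: the paper does not prove this lemma at all but simply refers to Proposition~A.1 of \cite{MS02}, and your sketch outlines precisely the Fa\`a di Bruno argument carried out there, correctly identifies the delicate combinatorial step, and defers to \cite{MS02} for its resolution. In short, you and the paper take the same route.
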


{\it Acknowledgments.} 

The author is indebted to Jean-Pierre Marco for suggesting him this problem to work on, for helpful discussions, comments and corrections on a first version of this paper. He also thanks the anonymous referee for several interesting suggestions. Finally, the author thanks the University of Warwick where this work has been finished while he was a Research Fellow through the Marie Curie training network ``Conformal Structures and Dynamics (CODY)".

\addcontentsline{toc}{section}{References}
\bibliographystyle{amsalpha}
\bibliography{NonPert3}

\end{document}